\newtheorem{mydef}{Definition}
\newtheorem{thrm}{Theorem}
\newtheorem{remark}{Remark} 
\newtheorem{lem}{Lemma}
\newcommand\norm[1]{\left\lVert#1\right\rVert}
\begin{document}
\title{Model Based Robust Control Law for \\Linear Event-triggered System}
\author{Niladri Sekhar Tripathy,~\IEEEmembership{}
        I. N. Kar~\IEEEmembership{}
        and~Kolin Paul~\IEEEmembership{}
\thanks{Niladri Sekhar Tripathy and I. N. Kar are with the Dept. of  Electrical Engineering, Indian Institute of Technology Delhi, New Delhi, India e-mail: (niladri.tripathy@ee.iitd.ac.in,  ink@ee.iitd.ac.in).}
\thanks{Kolin Paul is with the Dept. of Computer science \& Engineering, Indian Institute of Technology Delhi, New Delhi, India e-mail: kolin@cse.iitd.ac.in. }
\thanks{Manuscript received Month xx, 20xx; revised Month xx, 20xx.}}

%
%

\markboth{}%
{Shell \MakeLowercase{\textit{et al.}}: Bare Demo of IEEEtran.cls for Journals}
\maketitle

\begin{abstract}
This paper proposes a framework to design an event-triggered based robust control law for linear uncertain system. The robust control law is realized through both  static and dynamic event-triggering approach to reduce the computation and communication usages.   Proposed control strategies ensure stability in the presence of bounded matched and unmatched system uncertainties.  Derivation of event-triggering rule with a non-zero positive inter-event time and corresponding stability
criteria for uncertain event-triggered system are the key contributions of this paper. The efficacy of proposed algorithm is carried out through a comparative study of simulation results. 

\begin{keywords} Event-triggered control; robust control; event-triggered based robust control;  dynamic event-triggered control; aperiodic control; input to state stability.
\end{keywords}

\end{abstract}

\section{Introduction}

Aperiodic sensing, communication and computation play a crucial role for controlling resource constrained cyber-physical systems.  It is shown in \cite{so:16}-\cite{newbib} that aperiodic sampling has more benefits over periodic sampling, which motivates control researchers towards event-triggered control.  In event-triggered control,  sensing, communication and computation happens only when any predefined event condition is violated.  Event-triggered control strategy finds applications in different control problems like tracking  \cite{so:15}, estimation  \cite{so:14}-\cite{new4} etc. Event-triggered system is modeled as a perturbed system in continuous  and  discrete time domain respectively \cite{something6,something4}. Also the behaviour of such system is described by an impulsive dynamics in literature \cite{hi1}-\cite{hi}. To achieve larger average inter-event time, \cite{something} proposes a dynamic event-generating rule over the previous approach \cite{something6}, which makes event-triggered strategy more computationally efficient and predictable. The input to state stability (ISS) property \cite{new1}-\cite{so:three} is exploited to prove the closed loop stability and to define triggering condition for event-triggered system. Sahoo  et al \cite{hi,sao} proposed an event based adaptive control approach for uncertain systems. They use a neural network to estimate the nonlinear function to generate the control law. In event based robust control problems,  the uncertainty is mainly considered in the communication channel in the form of time-delay or data-packet loss \cite{so:12}. The main shortcoming of the classical event-triggered system lies in the fact that one must know the exact model of the plant apriori. A plant with an uncertain (system) model is a more realistic scenario and has far greater significance. However, there are open problems of designing a control law and triggering conditions to deal with system uncertainties. These uncertainties mainly arise due to system parameter variations, unmodeled dynamics, disturbances etc. which require the design of robust controller. An optimal control approach to robust controller design for the uncertain system has been reported in \cite{so:one}-\cite{so:2}. The applications include tracking problem in robot manipulator \cite{something5}-\cite{newci}, set-point regulation in CSTR system etc.  To achieve an optimal solution to the robust control problem there is a need to minimize a cost functional. In this direction, a non-quadratic cost functional is utilized to solve robust control problem with input constraint \cite{so:1}-\cite{so:2}. In the above mentioned approach, event-trigger based implementation of robust control law is not considered which is essential in the context of networked control systems (NCS).
\\ This paper considers the robust control strategies of linear uncertain system  with limited state and input information. The limited state information is considered to address the channel unreliability or bandwidth constraint which is a very common phenomena in NCS. 
To capture the channel unreliability and bandwidth limitations, event-triggered control strategy is adopted \cite{vg}. With limited information, existing robust control results  in \cite{so:one}-\cite{so:onetwo} can not be simply extended to the event-triggered system is the primary motivation for this work. This paper proposes a novel event based robust control strategy for both matched and unmatched uncertain systems. In a matched system, it is assumed that the unknown uncertainty is in the span of control input matrix. This assumption does not hold in case of the unmatched system. Here control input is computed and updated only when an event is generated.
\begin{figure}
\begin{center}
\includegraphics[width=7.5 cm,height=8.5 cm]{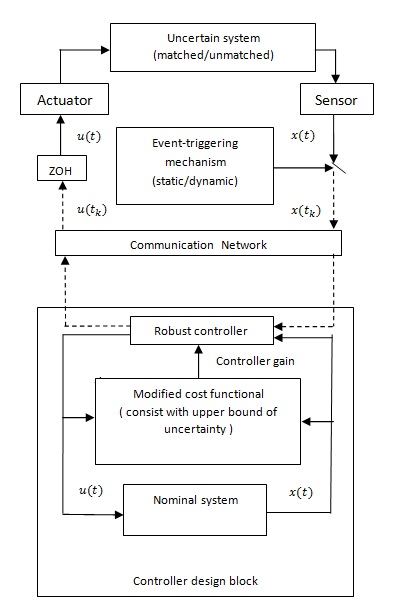}
\caption{Conceptual Block diagram of proposed  event-trigger based robust control. Dotted line represents the aperiodic information transmission through the communication channel. Here  $x(t_{k}), u(t_{k})$ are representing eventual state and contol input respectively.}\label{fi:11}
\end{center}
\end{figure} 
 A conceptual block diagram of the proposed framework is illustrated in Figure \ref{fi:11} where the system, sensor and actuator are co-located but the controller is connected thorough a communication network. A dedicated computing unit monitors the event condition at the sensor end. The aperiodic state transmission to controller and control input update instant $\{t_{k}\}_{k\in I}$ over the  network is decided by the same event-triggering law. For simplification, it is assumed that there is no communication, computation and actuation delay in the system.  To design robust control law, an equivalent optimal control problem is formulated with an appropriate cost functional which takes care of the upper bound of system uncertainty.  The nominal system dynamics is used to compute the optimal controller gain which minimizes the cost-functional. A zero-order-hold (ZOH) at the actuation end holds the last transmitted control input until the transmission of next input. The analysis of this  system is done in continuous time domain. The proposed method is verified for both static and dynamic event-triggering rule. In both cases corresponding triggering rule and their stability criteria for matched and unmatched uncertain system have been derived. The advantage of the proposed control strategy is that it significantly reduces the number of control input transmission and computation in spite of system uncertainties.
\subsubsection*{Summary of contribution} The main contributions of this paper are summarized as follows.
\begin{itemize}
\item Defining an optimal control problem to design a robust control law for both matched and unmatched uncertain system.
\item Deriving static and dynamic event-triggering rule for uncertain system using the upper bound of system uncertainty.
  \item    Ensuring stability of closed loop system using ISS Lyapunov function.
  \item  Deriving a positive non-zero lower bound of inter-execution time.
  \item A comparative study is carried out to verify the efficacy of event-triggered robust control law.
\end{itemize}

\subsubsection*{Organization of paper} The paper is organized as follows. In Section \ref{se12}, we briefly review of ISS, matched and  unmatched system uncertainty and optimal approach to robust control design. Section \ref{se123} and \ref{se14} discuss the optimal control approach to solve the robust stabilization problem for event-triggered system with system uncertainty.  Here both static and dynamic event triggering conditions are stated in the form of theorems and their corresponding proofs are reported. In section \ref{se123} we also give a mathematical expression of the minimum positive inter-event time.  Two different examples with simulation results are discussed in Section \ref{se15} to validate the proposed control algorithm.  Section \ref{se16} concludes the paper.
\subsubsection*{Notation}  The notation $\|.\|$ is used to denote the Euclidean norm of a vector $x\in{\mathbb{R}^{n}}$. Here $\mathbb{R}^{n}$ denotes the $n$ dimensional Euclidean real space and $\mathbb{R}^{n\times {m}}$ is a set of all $(n\times{m})$  real matrices. $\mathbb{R}^{+}_{0}$ and $\mathbb{I}$ denote the all possible set of positive real numbers and non-negative integers. $X\leq{0}$, $X^{T}$ and $X^{-1}$ represent the negative definiteness, transpose and inverse of matrix $X$, respectively. Symbol $I$ represents an identity matrix with appropriate dimensions and time $t_{\infty}$ implies $+\infty$. Symbols $\lambda_{min}(P)$ and $\lambda_{max}(P)$ denote  the minimum and maximum eigenvalue of  symmetric matrix $P\in \mathbb{R}^{n\times n} $ respectively. A function $f$: $\mathbb{R}_{\geq 0}\rightarrow \mathbb{R}_{\geq 0}$ is $K_{\infty}$ if it is continuous and strictly increasing and it satisfies $f(0)=0$ and $f(s)\rightarrow\infty$ as $s\rightarrow\infty$.
\section{Preliminaries \& problem statement}\label{se12}
\subsection{Preliminaries}
\subsubsection{Input to state stability} 
In state space form, a linear system with disturbance $d(t)\in\mathbb{R}^{n}$  is expressed as 
\begin{equation}\label{one}
\dot{x}(t)=Ax(t)+Bu(t)+d(t)
\end{equation}
where  $x(t)\in{R^{n}}$, $\ u(t)\in{R^{m}}$  are used to represent system's states and control input respectively. For simplification from now onwards, $x(t)$ and $u(t)$ are denoted by $x$ and $u$ respectively. Assuming disturbance function $d(t)$ as an external input and it is always bounded by a known function $d_{m}(t)$ i.e. $\|d(t)\|\leqslant d_{m}(t)$. The above system (\ref{one}) is said to be ISS with respect to $d(t)$ if there exist an ISS Lyapunov function. For analyzing ISS of the above system, following definition is introduced \cite{new1}-\cite{so:three}.     
\begin{mydef}\label{de1}
A continuous function $ V: \mathbb{R}^{n}\rightarrow \mathbb{R}$  is an ISS Lyapunov function for system (\ref{one}) if there exist class $\emph{k}_{\infty}$ functions $ \alpha_{1}, \alpha_{2}, \alpha_{3}$ and  $ \gamma$ for all $x, d \in\mathbb{R}^{n}$ and it satisfy
\begin{eqnarray}\label{deft1}
\alpha_{1}(\|x(t)\|)\leq V(x(t))\leq \alpha_{2}(\|x(t)\|)\\
\nabla{V}(x)\dot{x}\leq -\alpha_{3}{(\|x(t)\|)}+\gamma{(\|d(t)\|)}\label{deft2}
\end{eqnarray}
\end{mydef}
\subsubsection{System uncertainty} In (\ref{one}), system matrix $A$  and input matrix $B$  may depend on some uncertain parameters. In general system uncertainty is classified in two  categories namely matched and unmatched uncertainty. They are defined as follows: \\
\emph{System with matched uncertainty}:
A linear system having system-uncertainty is described by 
\begin{equation}\label{eq:five}
\dot{x}=A(p)x+Bu
\end{equation}
where $p\in{P} $ is an uncertain parameter vector. The system (\ref{eq:five}) has matched uncertainty if  there exists a bounded uncertain matrix $\phi({p})\in\mathbb{R}{^{m\times{n}}}$ such that 
\begin{equation}\label{mis66}
A(p)-A(p_{0})=B\phi({p})
\end{equation} 
for any $p\in{P}$, where $p_{0}$ is known nominal parameters and $A(p_{0})$ is nominal system matrix. In other words system uncertainty is assumed to be in the range space of input matrix $B$.  The condition (\ref{mis66}) is made to simplify the derivation of stability results. It is assumed that there exits a positive semi definite matrix $F$ to represent the upper bound of the uncertainty i.e., 
\begin{equation}\label{un190}
\phi(p)^{T}\phi(p)\leq{F}
\end{equation}
 for all $ p\in{P} $.\\ 
\emph{System with unmatched uncertainty}: System (\ref{eq:five}) have unmatched uncertainty if its uncertainty  is not in the range of input matrix, $B$.  In general system uncertainty ($A(p)-A(p_{0})$) can be decomposed in matched and unmatched component using pseudo-inverse $B^{+}$ of input matrix $B$ \cite{matrixthe}. Using $B^{+}=(B^{T}B)^{-1}B^{T}$, the 
uncertainty introduced in (\ref{eq:five}) can be written as
\begin{eqnarray}\label{mis1001}
A(p)-A(p_{0})&=&BB^{+}(A(p)-A(p_{0}))+(I-BB^{+})(A(p)-A(p_{0}))
\end{eqnarray}
Here $BB^{+}(A(p)-A(p_{0}))$ is matched and 
$(I-BB^{+})(A(p)-A(p_{0}))$ is an unmatched component of system uncertainty. It is assumed that $\forall \ p\in P$ their exist $F_{u}>0$ and $H>0$, such that following holds:
\begin{eqnarray}\label{mis10}
(A(p)-A(p_{0}))^{T}(B^{+})^{T}B^{+}(A(p)-A(p_{0}))\leq F_{u}\\
\alpha^{-2}((A(p)-A(p_{0}))^{T}(A(p)-A(p_{0}))\leq H\label{miswe}
\end{eqnarray}
Here the scalar $\alpha\geq0$ is a design parameter. 
Now to stabilize (\ref{eq:five}) with matched uncertainty (\ref{mis66}) (or unmatched uncertainty (\ref{mis1001})), we need to design a robust controller. The primary aim of robust controller is stated as follows:
\subsubsection{Robust control problem}
Find a state feedback control law $u=Kx$ such that the uncertain system (\ref{eq:five}) is stable with (\ref{mis66}) or (\ref{mis1001}) for any $p\in{P}$.
\\To solve the above mentioned  robust control problem, this paper has adopted an optimal control approach. The essential idea is to  compute the optimal control input for the nominal system which minimizes the modified cost functional. The cost functional is called modified cost functional as it depends on the upper bound of system uncertainty. The obtained optimal control input for nominal system is shown to be a robust control input for the actual uncertain system.  Here the system (\ref{eq:five}) may have  matched or unmatched uncertainty. In both cases their corresponding nominal dynamics and cost functional are considered as follows: 
\begin{itemize}
\item Nominal dynamics and cost functional for matched uncertain system are described as
\begin{equation}
\dot{x}=A(p_{0})x+Bu_{1}\label{mawe}
\end{equation}
\begin{equation}\label{eq:45}
J_{m}=\int_{0}^{\infty}(x^{T}F_{m}x+x^{T}Qx+u_{1}^{T}Ru_{1})dt
\end{equation}
 The matrix $F_{m}\geq 0$ is the upper bound of matched uncertainty and it is defined as
\begin{equation}
\phi(p)^{T}R\phi(p)\leq F_{m}
\end{equation}
\item Auxiliary dynamics and cost functional for the unmatched uncertain system are defined as
\begin{equation}\label{mismatched1}
\dot{x}=A(p_{0})x+Bu_{2}+\alpha(I-BB^{+})v
\end{equation}
\begin{equation}\label{mismatched2}
J_{u}=\int_{0}^{\infty}(x^{T}(F_{u}+\rho^{2}H+\beta^{2}I)x+u_{2}^{T}u_{2}+\rho^{2}v^{T}v)dt
\end{equation}
\end{itemize}
The  state feedback control input $u_{1}=K_{1}x$   is used to stabilize (\ref{mawe}). Similarly control inputs $u_{2}=K_{2}x$ and $ v=Lx$ are used for (\ref{mismatched1}). The control input $v$ is an auxiliary control input which ensures robustness in-spite of unmatched uncertainty. Now to design a robust control law using optimal control approach, following lemma is introduced  \cite{so:one}-\cite{so:onetwo}, \cite{so:2}. 
\begin{lem}\label{lemm1}
 Suppose we have an optimal control solution of nominal system (\ref{mawe}) for matched system [(\ref{mismatched1}) for unmatched]  with a modified cost functional (\ref{eq:45}) [(\ref{mismatched2}) for unmatched]. Then the optimal control law for the nominal system  will be the  robust control solution of the original system (\ref{eq:five})  for all bounded system uncertainty (\ref{mis66})     [unmatched uncertainty (\ref{mis1001})].
\end{lem}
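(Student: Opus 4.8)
The plan is to use the optimal value function of the nominal (matched) or auxiliary (unmatched) problem as a Lyapunov function for the genuinely uncertain closed loop, and to show that the extra terms built into the modified cost functional are exactly what is needed to dominate the uncertainty. I would treat the matched case in full and then mirror it for the unmatched case. For the matched system I would first record the solution of the nominal problem: since (\ref{mawe}) is linear and the integrand of (\ref{eq:45}) is quadratic, the optimal value function is $V(x)=x^{T}Px$ with $P=P^{T}>0$, the optimal feedback is $u_{1}^{\star}=-R^{-1}B^{T}Px$, and $P$ solves the algebraic Riccati equation
\begin{equation}\label{eq:aresketch}
A(p_{0})^{T}P+PA(p_{0})-PBR^{-1}B^{T}P+Q+F_{m}=0 ,
\end{equation}
this being the Hamilton--Jacobi--Bellman optimality condition, which I may assume once the optimal controller has been obtained.

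Next I would apply this \emph{same} feedback to the true uncertain plant (\ref{eq:five}). Using the matching condition (\ref{mis66}), the closed loop becomes $\dot{x}=(A(p_{0})-BR^{-1}B^{T}P)x+B\phi(p)x$. Differentiating $V$ along these trajectories and substituting (\ref{eq:aresketch}) to cancel the nominal terms leaves
\begin{equation}\label{eq:vdotsketch}
\dot{V}=-x^{T}Qx-x^{T}F_{m}x-x^{T}PBR^{-1}B^{T}Px+2x^{T}PB\phi(p)x ,
\end{equation}
whose only indefinite contribution is the cross term $2x^{T}PB\phi(p)x$. Absorbing this term is the crux of the argument and the step I expect to be the main obstacle. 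Setting $a=R^{-1/2}B^{T}Px$ and $b=R^{1/2}\phi(p)x$, the last two terms of (\ref{eq:vdotsketch}) equal $-\|a\|^{2}+2a^{T}b=-\|a-b\|^{2}+\|b\|^{2}\leq\|b\|^{2}=x^{T}\phi(p)^{T}R\phi(p)x$, and the defining bound $\phi(p)^{T}R\phi(p)\leq F_{m}$ dominates this by $x^{T}F_{m}x$, exactly cancelling the $-x^{T}F_{m}x$ carried over from the modified cost. Hence $\dot{V}\leq-x^{T}Qx<0$ for all $x\neq0$ and all $p\in P$, so $V$ is a strict Lyapunov function for the uncertain closed loop and the nominal optimal control is a robust control law. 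The delicacy is that this completion of squares closes only because $F_{m}$ was chosen to majorize $\phi(p)^{T}R\phi(p)$; the $F_{m}$ penalty is not an arbitrary weight but precisely the quantity that makes the cancellation exact rather than a bound that could fail.

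For the unmatched case I would run the same template on the auxiliary system (\ref{mismatched1}) with cost (\ref{mismatched2}). The optimal pair $(u_{2}^{\star},v^{\star})$ and value function $V=x^{T}Px$ again satisfy a Riccati identity, and differentiating $V$ along the true plant driven by $u=u_{2}^{\star}$ now produces cross terms from both the matched component $BB^{+}(A(p)-A(p_{0}))$ and the unmatched component $(I-BB^{+})(A(p)-A(p_{0}))$ of the uncertainty in (\ref{mis1001}). Completing the square on each and invoking the bounds (\ref{mis10}) and (\ref{miswe}), together with the penalties $\rho^{2}H$ and $\beta^{2}I$ embedded in (\ref{mismatched2}), should again render $\dot{V}$ negative definite. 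The extra difficulty here is the bookkeeping of the $\alpha(I-BB^{+})$ channel and the choice of $\rho$ and $\beta$ so that the unmatched cross term is dominated; the auxiliary input $v$ is introduced precisely to supply the direction needed to bound that term, which has no $B^{T}P$ factor to pair against.
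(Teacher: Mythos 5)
Your matched-case argument is correct and is, in substance, the paper's own proof. The paper differentiates $V(x)=x^{T}Sx$ along \eqref{eq:five}, substitutes the HJB identity \eqref{eq:11} and the stationarity condition \eqref{eq:12}, and then adds and subtracts $x^{T}\phi(p)^{T}R\phi(p)x$ so that the indefinite cross term is absorbed into $-x^{T}(K_{1}+\phi(p))^{T}R(K_{1}+\phi(p))x-x^{T}\bigl(F_{m}-\phi(p)^{T}R\phi(p)\bigr)x$. With $a=R^{-1/2}B^{T}Sx=-R^{1/2}K_{1}x$ and $b=R^{1/2}\phi(p)x$, your identity $-\|a\|^{2}+2a^{T}b=-\|a-b\|^{2}+\|b\|^{2}$ is exactly that regrouping ($\|a-b\|^{2}=x^{T}(K_{1}+\phi(p))^{T}R(K_{1}+\phi(p))x$), and both proofs close with the bound $\phi(p)^{T}R\phi(p)\leq F_{m}$. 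For that half there is nothing to add.

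The unmatched half, however, is only a plan, and the plan as stated has a genuine gap: the ``exact cancellation'' you emphasize in the matched case does \emph{not} carry over, and the conclusion for the unmatched system is in fact conditional. Carry the computation out: only $u_{2}=K_{2}x$ is applied to the true plant (the auxiliary input $v$ is never applied; in \eqref{mis12} it is added and subtracted), so when you invoke the HJB identity \eqref{mis3} you must eliminate the term $V_{x}^{T}\alpha(I-BB^{+})Lx$ that it contains, using \eqref{miss35}; this flips the cost's $-\rho^{2}x^{T}L^{T}Lx$ into $+\rho^{2}x^{T}L^{T}Lx$. Young's inequality on the matched cross term then consumes $x^{T}K_{2}^{T}K_{2}x+x^{T}F_{u}x$ (by \eqref{mis10}) and on the unmatched cross term consumes $\rho^{2}x^{T}L^{T}Lx+\rho^{2}x^{T}Hx$ (by \eqref{miswe}), so the $F_{u}$, $\rho^{2}H$ and $K_{2}^{T}K_{2}$ penalties cancel exactly, but a residual $+2\rho^{2}x^{T}L^{T}Lx$ survives, leaving $\dot{V}\leq -x^{T}(\beta^{2}I-2\rho^{2}L^{T}L)x$. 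Negative definiteness therefore requires the additional standing hypothesis $\beta^{2}I-2\rho^{2}L^{T}L>0$, which is precisely what the paper assumes (it appears as a hypothesis of Theorem \ref{th2} and as $Q_{2}$ in \eqref{qq}). This condition is not secured automatically ``by the choice of $\rho$ and $\beta$'': the gain $L$ itself depends on $\rho$, $\beta$ and $\alpha$ through the Riccati solution $\hat{S}$ of \eqref{rim}, so the inequality couples all the design parameters and must be verified a posteriori. Your sketch, by asserting that the penalties embedded in \eqref{mismatched2} ``should again render $\dot{V}$ negative definite,'' misses both the residual term and the extra hypothesis it forces.
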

\begin{proof}
A detailed explanation is given in Appendix \ref{ap121}.
\end{proof} 
\subsection{Problem description and statement}
In this paper, we realize the above mentioned robust control problem through an aperiodic state feedback control law. This formulation helps to realize such controller in the network control domain with  limited state information. The aperiodic control input computation and actuation instant is determined through a predefined state-dependent event condition. This event condition is derived from a stability criteria. Now if $\{t_{k}\}$ represents (aperiodic state transmission, control input computation and actuation instant) the event occurring instant, then the event-based state feedback control input will be 
\begin{equation}
u(t_{k})=Kx(t_{k}),\label{in12}
\end{equation}
which replaces the general continuous time state feedback control law $u(t)=Kx(t)$. 
To solve the robust control problem through a aperiodic control law (\ref{in12}) the uncertain linear system  (\ref{eq:five})  can be rewritten as
  \begin{eqnarray}\label{nst1}
\dot{x}=A(p)x(t)+Bu(t_{k})
\end{eqnarray}
for any $p\in P$. Adopting the concepts introduced in \cite{something6}, the event-based  closed loop system (\ref{nst1}) reduces to
\begin{eqnarray}
\dot{x}=A(p)x+BK(x+e)\label{eq:four}
\end{eqnarray}
Here the variable $e\in{R^{n}}$ is referred to as measurement error and is defined as
\begin{equation}\label{err1}
e(t)=x(t_{k})-x(t),  \forall{t\in{[t_{k},t_{k+1})},k\in{\mathbb{I}}}
\end{equation}
Using (\ref{eq:four}) and (\ref{mis66})  the event-triggered system with matched uncertainty is described as 

 \begin{equation}
\dot{x}=A(p_{0})x+Bu_{1}+B(\phi(p)x+K_{1}e)\label{eq:eight}
\end{equation}
 For unmatched uncertainty (\ref{mis1001}),   the event-triggered system (\ref{eq:four}) is written as
  \begin{eqnarray}\label{mis12}
\dot{x}&=&A(p_{0})x+BK_{2}(x+e)+\alpha(I-BB^{+})L(x+e)+BB^{+}(A(p)-A(p_{0}))x\nonumber
\\&&+(I-BB^{+})(A(p)-A(p_{0}))x-\alpha(I-BB^{+})L(x+e)
\end{eqnarray} 
\textbf{Problem statement:} The design of the controller gain $K_{1}$ for (\ref{eq:eight}) and $K_{2}$ along with $L$ for (\ref{mis12}) to stabilize  an uncertain event-triggered system (\ref{eq:eight}) or (\ref{mis12}) such that the entire closed loop system is ISS with respect to its measurement error $e$ (defined in (\ref{err1})) is the problem that we attempt to solve.  
The solution to the problem is derived in two steps. Firstly, we design a controller using Lemma \ref{lemm1} and then define an event-triggering rule such that the  closed loop system (\ref{eq:eight}) [or (\ref{mis12})] is ISS. These two solution steps are briefly discussed in next two subsections.    
\subsubsection{Controller design} 
The system (\ref{mawe}) is the nominal dynamics of (\ref{eq:eight}) for matched system. Now using Lemma \ref{lemm1} the optimal controller gain $K_{1}$ of (\ref{mawe}) which minimizes the cost functional (\ref{eq:45}) will be the robust solution of (\ref{eq:eight}). Similarly for   unmatched system optimal gain $ K_{2} ,\ L$ of (\ref{mismatched1}), that minimizes cost functional (\ref{mismatched2}), is the robust solution for (\ref{mis12}).
\begin{description}
\item[Step 1] For matched system, control input $u(t)$ is designed by minimizing $J_{m}$. Suppose $V(x)$ be a Lyapunov function for  (\ref{eq:eight}), using optimal control results  $V(x)$ should satisfy Hamilton Jacobi Bellman (HJB) equation \cite{opti,so:onetwo}
 \begin{equation}\label{eq:99}
min_{u_{1}\in{R^{m}}}(x^{T}F_{m}x+x^{T}Qx+u_{1}^{T}Ru_{1}+V_{x}^{T}(A(p_{0})x+Bu_{1})=0
\end{equation}
where $V_{x}=\frac{\partial{V}}{\partial{x}}$ and $u_{1}=K_{1}x$. The equation (\ref{eq:99}) reduces to 
   \begin{equation}\label{eq:11}
(x^{T}F_{m}x+x^{T}Qx+u_{1}^{T}Ru_{1}+V_{x}^{T}(A(p_{0})x+Bu_{1})=0
\end{equation}
\item[Step 2]
According to optimal control theory, the optimal input $u_{1}(t)$ should minimize the Hamiltonian \cite{opti} 
\begin{equation}\label{cd:12}
H(x(t),u_{1}(t),V_{x})=x^{T}Fx+x^{T}Qx+u_{1}^{T}Ru_{1}+V_{x}^{T}(A(p_{0})x+Bu_{1})
\end{equation} 
  which leads to
 \begin{equation}\label{eq:12}
 \frac{\partial H(x(t),u_{1}(t),V_{x},t)}{\partial u_{1}(t)}=2x^{T}K_{1}^{T}R+V_{x}^{T}B=0
\end{equation}
\item[Step 3]
 For solving an infinite-time linear quadratic regulator (LQR) problem, a quadratic function $V(x)=x^{T}Sx$ is defined,  where matrix $S>0$. With this choice  the HJB equation reduces to the following algebraic Riccati equation 
 \begin{equation}\label{ri}
SA(p_{0})+A(p_{0})^{T}S+F_{m}+Q-SBR^{-1}B^{T}S=0
\end{equation} 
The solution  $S$ of (\ref{ri}) is used to compute the optimal control input $u_{1}$  which is
  \begin{equation}\label{con12l}
  u_{1}(t)=-R^{-1}B^{T}Sx(t)=K_{1}x(t)
 \end{equation}
 \item[Step 4] Control gain $K_{1}$ for (\ref{mawe}) and aperiodic state information of original system  $x(t_{k})$ are used to compute the control law  
\begin{equation}\label{unc10}
u_{1}(t_{k})=K_{1}x({t_{k}})
\end{equation}
\end{description}
The above mentioned steps 1 to 4 are also adopted for the unmatched system (\ref{mismatched1}) with cost functional $J_{u}$ (\ref{mismatched2}). The control input $u_{2}(t)$ and auxiliary input $v(t)$ are computed if the following  are satisfied:  
\begin{eqnarray}\label{mis3}
x^{T}(F_{u}+\rho^{2}H+\beta^{2}I)x+u_{2}^{T}u_{2}+\rho^{2}v^{T}v+V_{x}^{T}(A(p_{0})x+Bu_{2}+\alpha(I-BB^{+})v)=0
\end{eqnarray}
\begin{equation}\label{miss34}
x^{T}K_{2}^{T}+V_{x}^{T}B=0
\end{equation}
\begin{equation}\label{miss35} 
2\rho^{2}x^{T}L^{T}+V_{x}^{T}\alpha(I-BB^{+})=0
\end{equation}
In LQR problem, the HJB equation (\ref{mis3}), reduces to a following  algebraic Riccati equation  
\begin{equation}\label{rim}
 \hat{S}A(p_{0})+A(p_{0})^{T}\hat{S}+F_{u}+\rho^{2}H+\beta^{2}I-\hat{S}(BB^{T}+\alpha^{2}\rho^{-2}(I-BB^{+})^{2})\hat{S}=0
 \end{equation}
 The solution $\hat{S}>0$ of (\ref{rim}) is used to compute control input $u_{2}$ and auxiliary input $v$ and given by  
  \begin{equation}\label{vf12}
 \begin{bmatrix}
u_{2}(t)\\
v(t)
\end{bmatrix}
=\begin{bmatrix}
-B^{T}\hat{S}\\
-\alpha\rho^{2}(I-BB^{+})\hat{S}
\end{bmatrix}
=\begin{bmatrix}
K_{2}\\
L
\end{bmatrix}x(t)
 \end{equation}
 The optimal controller gain $K_{2}$ for (\ref{mismatched1}) is used to generate the robust event-triggered control input for (\ref{mis12}) and it is written as
 \begin{equation}\label{unc09}
u_{2}(t_{k})=K_{2}x({t_{k}})
\end{equation}
  Now for event-triggered control it is important to design the event triggering instant such that uncertain system (\ref{nst1}) is ISS with a aperiodic control law (\ref{in12}). The approach for deriving the triggering law is discussed below.
 \subsubsection{Triggering condition design} Given an uncertain system (\ref{nst1}) with a linear controller (\ref{in12}) there must have an event-triggering instant $t_{k\in\mathbb{I}}$with  a positive inter-execution time $(t_{k+1}-t_{k}=\tau>0)$ such that the closed loop system (\ref{nst1}) is ISS. To prove this there must have an ISS Lyapunov function with the time derivative  in the form of (\ref{deft2}). The ISS condition in the form of (\ref{deft2}) helps to construct the event-triggering rule in-terms of measurement error norm $\|e(t)\|$ and  the state norm of the system  original system $\|x(t)\|$. To design the event-triggering law for matched system (\ref{eq:eight}) and unmatched system (\ref{mis12}), the ISS Lyapunov functions  are considered as follows: 
\begin{itemize}
\item ISS Lyapunov function for matched system:
\begin{equation}\label{eq:478}
V_{m}(x)=x^{T}Sx
\end{equation}
\item ISS Lyapunov function for unmatched system:
\begin{equation}\label{co12}
V_{u}(x)=x^{T}\tilde{S}x
\end{equation} 
\end{itemize} 
\section{Static event-triggered robust control}\label{se123}
 This section describes static event-triggering  law for both matched  and unmatched uncertain systems. Here the main results of this paper are stated in the form of following theorems and  proofs. 
 \begin{thrm}\label{th1}
Suppose the controller gain matrix $K_{1}$ is designed for the nominal system (\ref{mawe}) by minimizing the cost functional (\ref{eq:45}). The matched uncertain system (\ref{eq:eight}), with even-trigger based controller (\ref{unc10}), is ISS if there exist a static event occurring sequence $\{t_{k}\}_{k\in{\mathbb{I}}}$ given by
\begin{equation}\label{triwe}
t_{0}=0, \ t_{k+1}=inf\{{t\in{\mathbb{R}}|t>t_{k}\wedge\mu_{1}\|x\|-\|e\|}\leq0\}
\end{equation} 
where parameter $\mu_{1}$ is defined in (\ref{eq:1213}).
\end{thrm}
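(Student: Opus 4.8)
The plan is to use the quadratic ISS Lyapunov candidate $V_{m}(x)=x^{T}Sx$ from (\ref{eq:478}), differentiate it along the matched closed-loop dynamics (\ref{eq:eight}), and massage the derivative into the dissipation form (\ref{deft2}); the triggering rule (\ref{triwe}) is then what forces the dissipation to stay strictly negative between events. First I would compute
\[
\dot{V}_{m}=x^{T}\big(SA(p_{0})+A(p_{0})^{T}S\big)x+2x^{T}SBu_{1}+2x^{T}SB\phi(p)x+2x^{T}SBK_{1}e,
\]
and substitute the optimal gain $u_{1}=K_{1}x=-R^{-1}B^{T}Sx$ from (\ref{con12l}), so that $2x^{T}SBu_{1}=-2x^{T}SBR^{-1}B^{T}Sx$.

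The central simplification comes from the algebraic Riccati equation (\ref{ri}), which lets me replace $SA(p_{0})+A(p_{0})^{T}S$ by $SBR^{-1}B^{T}S-F_{m}-Q$. The only remaining uncertain term $2x^{T}SB\phi(p)x$ I would bound by completing the square with the weight $R$: writing it as $2(B^{T}Sx)^{T}(\phi(p)x)$ and applying $2a^{T}b\le a^{T}R^{-1}a+b^{T}Rb$ gives
\[
2x^{T}SB\phi(p)x\le x^{T}SBR^{-1}B^{T}Sx+x^{T}\phi(p)^{T}R\phi(p)x\le x^{T}SBR^{-1}B^{T}Sx+x^{T}F_{m}x,
\]
where the last step invokes the uncertainty bound $\phi(p)^{T}R\phi(p)\le F_{m}$. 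Adding these contributions, the two copies of $SBR^{-1}B^{T}S$ and of $F_{m}$ cancel against the Riccati residue, leaving the clean estimate $\dot{V}_{m}\le -x^{T}Qx+2x^{T}SBK_{1}e$.

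It remains to absorb the error term and close the loop with the triggering condition. Bounding $2x^{T}SBK_{1}e\le 2\|SBK_{1}\|\,\|x\|\,\|e\|$ already exhibits the ISS form (\ref{deft2}) with $\alpha_{3}(\|x\|)=\lambda_{min}(Q)\|x\|^{2}$ and a class-$k_{\infty}$ function $\gamma$ obtained from the cross term by a further Young inequality. To guarantee convergence I would then use (\ref{triwe}) to enforce $\|e\|\le\mu_{1}\|x\|$ for all $t\in[t_{k},t_{k+1})$; substituting this yields
\[
\dot{V}_{m}\le-\big(\lambda_{min}(Q)-2\mu_{1}\|SBK_{1}\|\big)\|x\|^{2},
\]
which is negative definite precisely when $\mu_{1}$ is taken below $\lambda_{min}(Q)/(2\|SBK_{1}\|)$; this is the threshold fixed in (\ref{eq:1213}).

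The main obstacle is the treatment of the matched uncertainty: the argument hinges on the fact that the $R$-weighted completion of the square produces exactly the surplus $SBR^{-1}B^{T}S+F_{m}$ needed to annihilate the Riccati residue, so that the unknown $\phi(p)$ disappears from the bound entirely. This is the concrete manifestation of Lemma \ref{lemm1}, and choosing the weighting matrices so that they match, rather than leaving a sign-indefinite remainder, is the only delicate point; the error and triggering estimates that follow are routine.
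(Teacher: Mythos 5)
Your computation is sound up to the estimate $\dot V_m\le -x^{T}Qx+2x^{T}SBK_{1}e$: the Riccati substitution, the explicit gain $K_{1}=-R^{-1}B^{T}S$, and the $R$-weighted Young inequality are all applied correctly. The genuine problem is your final sentence, where you identify your admissible threshold $\lambda_{min}(Q)/(2\|SBK_{1}\|)$ with the quantity in (\ref{eq:1213}). It is not the same quantity. The paper never discards the uncertainty by completing the square against the Riccati residue; instead it keeps the cross term exactly, using $V_{x}^{T}B=-2x^{T}K_{1}^{T}R$ from (\ref{eq:12}) to write $V_{x}^{T}B\phi(p)x=-2x^{T}K_{1}^{T}R\phi(p)x$, and then completes the square \emph{inside} the quadratic form, obtaining the identity $\dot V=-x^{T}Q_{1}x+2x^{T}SBK_{1}e$ with $Q_{1}=(F_{m}-\phi(p)^{T}R\phi(p))+Q+(K_{1}+\phi(p))^{T}R(K_{1}+\phi(p))$ as in (\ref{eq1000}). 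Your Young step amounts to replacing the two positive semidefinite terms $(F_{m}-\phi^{T}R\phi)$ and $(K_{1}+\phi)^{T}R(K_{1}+\phi)$ by zero, so you get dissipation rate $\lambda_{min}(Q)$ where the paper gets $\lambda_{min}(Q_{1})\ge\lambda_{min}(Q)$. Since (\ref{eq:1213}) sets $\mu_{1}=\sigma\lambda_{min}(Q_{1})/(2\|SBK_{1}\|)$, the theorem's threshold can strictly exceed the largest threshold your estimate tolerates (take $\sigma$ near $1$ and $\lambda_{min}(Q_{1})>\lambda_{min}(Q)$, which happens whenever the discarded terms are not singular in the direction of $Q$'s smallest eigenvector).

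Concretely, then, your proof establishes ISS only for a more conservative triggering rule (more events), not for the rule (\ref{triwe}) with $\mu_{1}$ as defined in (\ref{eq:1213}): when $\sigma\lambda_{min}(Q_{1})>\lambda_{min}(Q)$, that rule admits measurement errors that your dissipation bound cannot absorb, so Theorem \ref{th1} as stated is not proved. The fix is simply to not give the square away: keep the exact form $-x^{T}Q_{1}x+2x^{T}SBK_{1}e$, invoke the positive definiteness of $Q_{1}$ (the paper's first Remark), and conclude $\dot V\le(\sigma-1)\lambda_{min}(Q_{1})\|x\|^{2}<0$ under (\ref{triwe}) for $\sigma\in(0,1)$. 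It is worth noting that your conservative threshold has one practical merit --- it is computable from $Q$ alone, without evaluating $\lambda_{min}(Q_{1})$ over the whole uncertainty set as the paper must do --- but that is a different and weaker statement than the one you were asked to prove.
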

\begin{proof} \label{th01}  To prove ISS of (\ref{eq:eight}), it is necessary to simplify the derivative of ISS Lyapunov function  $V(x)$  in the form of (\ref{deft2}).  Here the expression  of  $V(x)$ is same as $V_{m}(x)$, as defined in (\ref{eq:478}). The time derivative of $V(x)$ along the trajectories of (\ref{eq:eight}) can be written as
 \begin{eqnarray}
\dot{V}(x)&=&V_{x}^{T}\dot{x}\label{VdotEqn} \\
&=&V_{x}^{T}(A(p_{0})x+BK_{1}x+B\phi(p)x+BK_{1}e) \nonumber
\end{eqnarray}
 Using  (\ref{eq:11}) and substituting $V_{x}^{T}=2x^{T}S$  
\begin{eqnarray*}
\dot{V}(x)&=&-x^{T}F_{m}x-x^{T}Qx-u^{T}Ru -2x^{T}K_{1}^{T}R\phi(p)x+2x^{T}SBK_{1}e \\
&=&- x^{T}((F_{m}-\phi(p)^{T}R\phi(p))+Q +
(K_{1}+\phi(p))^{T}R(K_{1}+\phi(p)))x+2x^{T}SBK_{1}e
\end{eqnarray*}

According to  Definition \ref{de1}, the condition (\ref{deft2}) holds if
\begin{eqnarray}\label{w23}
\alpha(\|x\|)=\frac{\lambda_{min}(Q_{1})}{2}\|x\|^{2}\\
\gamma(\|e\|)=\frac{2\|SBK_{1}K_{1}^{T}B^{T}S\|}{\lambda_{min}(Q_{1})}\|e\|^{2}\label{w83}
\end{eqnarray}
In the above expression, 
\begin{equation}\label{eq1000}
Q_{1}=(F_{m}-\phi(p)^{T}R\phi(p))+Q+(K_{1}+\phi(p))^{T}R(K_{1}+\phi(p))
\end{equation}
  From (\ref{deft2}),\ (\ref{w23}) and (\ref{w83}) it can be written that  the following triggering condition  need to be violated to update the control input.
\begin{eqnarray}\label{eq:1212}
\|e\|\leq\mu_{1}\|x\|
\end{eqnarray}
Here notation $\mu_{1}$ is used to represent
\begin{equation}\label{eq:1213}
\mu_{1}=\frac{\sigma\lambda_{min}(Q_{1})}{2\|SBK_{1}\|}
\end{equation}
Also (\ref{eq:1212}), (\ref{eq:1213}) suggest the time instant at which the event has occurred.
 \begin{equation}\label{eq:120}
 t_{0}=0,\ t_{k+1}=inf\{{t\in{\mathbb{R}}|t>t_{k}\wedge\mu_{1}\|x\|-\|e\|}\leq0\}
 \end{equation}
  Using (\ref{eq:120}), $\dot{V}(x)$ becomes
 \begin{equation*}
 \dot{V}(x)\leq(\sigma-1)\lambda_{min}(Q_{1})\|x\|^{2}
 \end{equation*}
 Therefore the uncertain static event-triggered system (\ref{eq:eight}) is stable  $\forall \ \sigma\in(0,1)$. 
\end{proof}
\begin{remark}
 It is seen from (\ref{un190}) that the first term of $Q_{1}$ is a positive definite matrix. The bound on final term of  $Q_{1}$ is derived as
$(K_{1}+\phi(p))^{T}R(K_{1}+\phi(p))\leqslant \|K_{1}^{T}RK_{1}\|+\lambda_{max}(RF_{m}).$ The  positiveness of all three terms ensure the positive definiteness of $Q_{1}$. 
\end{remark}
\begin{remark}
In the absence of uncertainty, $\phi(p)=0$, the expression  (\ref{triwe}) reduces to the similar results reported in \cite{something6,something}. In this sense, the proposed algorithm generalizes the existing results and also valid for uncertain systems.      
\end{remark}
The results for unmatched system is stated in the form of following theorem:
\begin{thrm}\label{th2}
Suppose the controller gain matrices $K_{2}$ and $L$ are designed for the nominal system (\ref{mismatched1}) by minimizing the cost functional (\ref{mismatched2}) and the inequality $\beta^{2}I-2\rho^2L^{T}L>0$ holds, then the uncertain system (\ref{mis12}), with event-based control law (\ref{unc09}) is ISS if there exist a static event occurring sequence $\{t_{k}\}_{k\in I}$ given by
\begin{equation}
t_{0}=0, t_{k+1}=inf\{{t\in{\mathbb{R}}|t>t_{k}\wedge\mu_{2}\|x\|-\|e\|}\leq0\}
\end{equation}
where the design parameter $\mu_{2}$ is defined in (\ref{miu1}).
\end{thrm}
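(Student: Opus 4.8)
The plan is to mirror the proof of Theorem \ref{th1}, now using the ISS Lyapunov candidate $V(x)=x^{T}\hat{S}x$ with $\hat{S}>0$ the solution of the Riccati equation (\ref{rim}) (this is the matrix written $\tilde{S}$ in (\ref{co12})). First I would compute $\dot V=V_{x}^{T}\dot x=2x^{T}\hat{S}\dot x$ along the closed-loop unmatched dynamics (\ref{mis12}), and split $\dot x$ into five groups: (i) the nominal auxiliary state part $A(p_{0})x+BK_{2}x+\alpha(I-BB^{+})Lx$; (ii) the error-driven part $BK_{2}e+\alpha(I-BB^{+})Le$; (iii) the matched uncertainty $BB^{+}(A(p)-A(p_{0}))x$; (iv) the unmatched uncertainty $(I-BB^{+})(A(p)-A(p_{0}))x$; and (v) the subtracted compensation $-\alpha(I-BB^{+})L(x+e)$. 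The reason for keeping the added-and-subtracted auxiliary term intact, rather than cancelling it at the outset, is that group (i) is exactly the argument of the HJB equation (\ref{mis3}); this lets me replace $V_{x}^{T}(A(p_{0})x+BK_{2}x+\alpha(I-BB^{+})Lx)$ by $-x^{T}(F_{u}+\rho^{2}H+\beta^{2}I)x-x^{T}K_{2}^{T}K_{2}x-\rho^{2}x^{T}L^{T}Lx$.

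Next I would bound the two uncertainty groups by Young's inequality, tuned so that the uncertainty upper bounds cancel the negative terms produced by the HJB substitution. For the matched group, writing $2x^{T}\hat{S}B\cdot B^{+}(A(p)-A(p_{0}))x\le x^{T}\hat{S}BB^{T}\hat{S}x+x^{T}(A(p)-A(p_{0}))^{T}(B^{+})^{T}B^{+}(A(p)-A(p_{0}))x$ and invoking (\ref{mis10}) gives the bound $x^{T}\hat{S}BB^{T}\hat{S}x+x^{T}F_{u}x$. For the unmatched group I would apply Young's inequality with weight $\alpha^{2}\rho^{-2}$ and use (\ref{miswe}) to obtain $\alpha^{2}\rho^{-2}x^{T}\hat{S}(I-BB^{+})^{2}\hat{S}x+\rho^{2}x^{T}Hx$. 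The $x^{T}F_{u}x$ and $\rho^{2}x^{T}Hx$ terms then exactly cancel their counterparts from the HJB substitution, and the residual quadratic $x^{T}\hat{S}(BB^{T}+\alpha^{2}\rho^{-2}(I-BB^{+})^{2})\hat{S}x$ is identified, using the gain identities $K_{2}=-B^{T}\hat{S}$ and $\rho^{2}L^{T}=-\alpha\hat{S}(I-BB^{+})$ coming from (\ref{vf12}) and (\ref{miss35}), as $x^{T}K_{2}^{T}K_{2}x+\rho^{2}x^{T}L^{T}Lx$.

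The same gain identities collapse groups (ii) and (v): since $\hat{S}B=-K_{2}^{T}$ and $\alpha\hat{S}(I-BB^{+})=-\rho^{2}L^{T}$, the $\alpha(I-BB^{+})Le$ contributions from (ii) and (v) cancel, leaving the single error term $-2x^{T}K_{2}^{T}K_{2}e$, while the state contribution of (v), namely $2\rho^{2}x^{T}L^{T}Lx$, survives. Collecting everything, the $K_{2}^{T}K_{2}$ state terms cancel and I am left with $\dot V\le -x^{T}(\beta^{2}I-2\rho^{2}L^{T}L)x-2x^{T}K_{2}^{T}K_{2}e$. This is exactly where the hypothesis $\beta^{2}I-2\rho^{2}L^{T}L>0$ enters: it guarantees that $Q_{2}:=\beta^{2}I-2\rho^{2}L^{T}L$ is positive definite, so the first term is genuinely negative. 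Bounding the cross term by $2\|K_{2}^{T}K_{2}\|\,\|x\|\,\|e\|$ and imposing the event condition $\|e\|\le\mu_{2}\|x\|$ with $\mu_{2}=\sigma\lambda_{min}(Q_{2})/(2\|K_{2}^{T}K_{2}\|)$ then yields $\dot V\le(\sigma-1)\lambda_{min}(Q_{2})\|x\|^{2}<0$ for every $\sigma\in(0,1)$, which establishes ISS and furnishes the triggering law.

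I expect the main obstacle to be the bookkeeping in the telescoping cancellation: choosing the Young weight $\alpha^{2}\rho^{-2}$ so that the uncertainty bounds $F_{u}$ and $\rho^{2}H$ annihilate the HJB-generated terms, and carefully tracking the factor of two that leaves the residual state term as $-(\beta^{2}I-2\rho^{2}L^{T}L)$ rather than $-\beta^{2}I$. Getting that factor right is precisely what forces the stated positivity assumption $\beta^{2}I-2\rho^{2}L^{T}L>0$; without it the quadratic form in $x$ need not be negative definite and the ISS conclusion would break down.
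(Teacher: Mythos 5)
Your proposal is correct and follows essentially the same route as the paper's proof: expand $\dot V$ along (\ref{mis12}), substitute the HJB/optimality identities (\ref{mis3})--(\ref{miss35}), bound the two uncertainty cross-terms using Young's inequality together with (\ref{mis10}) and (\ref{miswe}), and arrive at $\dot V\le -x^{T}Q_{2}x$ plus an error cross-term with $Q_{2}=\beta^{2}I-2\rho^{2}L^{T}L$, so that the triggering rule with $\mu_{2}$ yields $\dot V\le(\sigma-1)\lambda_{min}(Q_{2})\|x\|^{2}$. Your write-up merely makes explicit what the paper compresses into ``after simplification'' (the Young weights $1$ and $\alpha^{2}\rho^{-2}$, the gain identities $\hat S B=-K_{2}^{T}$ and $\rho^{2}L^{T}=-\alpha\hat S(I-BB^{+})$, and the cancellation bookkeeping), and since $\|K_{2}^{T}K_{2}\|=\|\tilde S BK_{2}\|$ your $\mu_{2}$ coincides with (\ref{miu1}).
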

\begin{proof}
 Assume $V_{u}(x)$ is an ISS Lyapunov function of (\ref{mis12}) and denote $V_{u}(x)$ by $V(x)$. The time derivative of $V(x)$ along the state-trajectory of (\ref{mis12}) is simplified as
\begin{eqnarray*}
\dot{V}(x)&=& V_{x}^{T}\dot{x}\\
&= & V_{x}^{T}(A(p_{0})x+BK_{2}x+BK_{2}e)+V_{x}^{T}\alpha(I-BB^{+})Lx
+ V_{x}^{T}\alpha(I-BB^{+})Le\\&&+V_{x}^{T}BB^{+}(A(p)-A(p_{0}))x+V_{x}^{T}(I-BB^{+})(A(p)-A(p_{0}))x+V_{x}^{T}\alpha(I-BB^{+})(Le+Lx)
\end{eqnarray*}
 Using (\ref{mis3}), (\ref{miss34}) and (\ref{miss35}) 
\begin{eqnarray*}
\dot{V}(x)&=&-x^{T}\{(F_{u}+\rho^{2}+\beta^{2}I)+K_{2}^{T}K_{2}+\rho^{2}L^{T}L+2K_{2}^{T}B^{T}(A(p)-A(p_{0}))\\&&+2\alpha^{-1}\rho^{2}L^{T}(A(p)-A(p_{0}))\}x+2x^{T}\tilde{S}BK_{2}e
\end{eqnarray*}
Implying the upper bound mentioned in (\ref{mis10}), (\ref{miswe}) and after simplification the above equality turns to the following inequality.
\begin{equation}\label{mis15}
\dot{V}(x)\leq-\lambda_{min}(Q_{2})\|x\|^{2}+\|\tilde{S}BK_{2}K_{2}^{T}B^{T}\tilde{S}\|\|e\|\|x\|
 \end{equation}
 Now as per \emph{Defination I} the ISS condition mentioned in (\ref{deft1}), (\ref{deft2}) holds if 

\begin{eqnarray}\label{al11}
\alpha (\|x\|)=\frac{\lambda_{min}(Q_{2})}{2}\|x\|^{2}\\
\gamma(\|e\|)=\frac{2\|\tilde{S}BK_{2}K_{2}^{T}B^{T}\tilde{S}\|}{\lambda_{min}(Q_{2})}\|e\|^{2}\label{all22}
\end{eqnarray}
 In the above expression (\ref{al11})
\begin{equation}\label{qq}
Q_{2}=\beta^{2}I-2\rho^2L^{T}L
\end{equation}
 By hypothesis of Theorem \ref{th2}, matrix $Q_{2}>0$.
Using (\ref{deft2}), (\ref{al11}) and (\ref{all22}) it can be concluded  that  control input should update if the following inequality is violated. 
\begin{equation}\label{mis16}
\|e\|\leq\mu_{2}\|x\|
\end{equation}
 Here parameter $\mu_{2}$ is defined as 
 \begin{equation}\label{miu1}
 \mu_{2}=\frac{\sigma\lambda_{min}(Q_{2})}{2\|\tilde{S}BK_{2}\|}
 \end{equation}
  Equation (\ref{mis16}), (\ref{miu1})  also suggest the  time instant when the condition (\ref{mis16}) dose not hold and it is expressed as
   \begin{equation}\label{conwe1}
t_{0}=0, t_{k+1}=inf\{{t\in{\mathbb{R}}|t>t_{k}\wedge\mu_{2}\|x\|-\|e\|}\leq0\}
\end{equation}
 Using (\ref{conwe1}) the trajectory of $V(x)$ will be bounded by 
 \begin{equation}\label{mis17}
 \dot{V}(x)\leqslant(\sigma-1)\lambda_{min}(Q_{2})\|x\|^{2}
 \end{equation}
 which ensure that $\dot{V}(x)$ is decreasing    $\forall\sigma\in(0,1)$. 
\end{proof}
Theorem \ref{th1} \& \ref{th2} ensure stability of uncertain systems (\ref{eq:eight}), (\ref{mis12}) by static event-triggering rule respectively. 
An algorithmic representation of static event-triggered control with matched system uncertainty  is given next.
 \begin{algorithm}[H]
    \caption{Static event-triggered control for matched uncertain system}\label{alg:calcqij}
    \begin{algorithmic}[1]
      \STATE Initialization: $t\Leftarrow0$, $x\Leftarrow x_{0}$ , $x(t_{k})\Leftarrow x_{0}$.
      \STATE Given: $A(p_{0})$, $B$, $F_{m}$, $\sigma$
      \STATE Compute $\|x\|$, $\|e\|$ and $\mu_{1}$ using (\ref{eq:1213})
      \IF{$\|e\|$ $\geq$ $\mu_{1}\|x\|$}
      \STATE  Transmit $x(t_{k})$ from system end to  controller end.
      \STATE Solve algebraic Riccati equation (\ref{ri}) and compute controller gain $K_{1}$
      \STATE Compute $u_{1}(t_{k})=K_{1}x(t_{k})$
      \STATE Update $u_{1}(t_{k})$ in  (\ref{unc10})
      \ELSE 
      \STATE $u_{1}(t)=u_{1}(t_{k-1})$
      \ENDIF
      
      \STATE Return to line 3
    \end{algorithmic}
  \end{algorithm}
\subsection*{Minimum time interval in between two consecutive events}
In event-triggered control inter execution time depends on the evolution of  $\|e\|/\|x\|$ with respect to time. At $t_{k}$ the ratio of $\|e\|/\|x\|$  is zero as measurement error $e=0$. The next event will occur at $t_{k+1}$, when the $\|e\|/\|x\|$ turns to $\mu_{1}$.  Using (\ref{conwe1}), the minimum time required to evolve $\|e\|/\|x\|$ from $0$ to $\mu_{1}$ defines the lower bound of the inter-event time $\{t_{k+1}-t_{k}\}_{\forall k\in\mathbb{I}}=\tau>0$. Here inter-event time $\tau$ should be always a non-zero positive time interval to avoid the so called Zeno behaviour\footnote{Infinite number of transmission and computation in a finite time \cite{zenohy}.}. The minimum time interval in between two consecutive events of proposed robust control mechanism is stated in the form of a theorem.
\begin{thrm}\label{th3}
$\forall \ \sigma\in(0,1)$, the system (\ref{eq:eight}) with triggering law (\ref{eq:120}) has strictly positive lower bound of inter-event time $\tau>0$ and it is expressed as
\begin{eqnarray}\label{tauer}
\tau=\frac{2}{\sqrt{L_{3}^{2}-4L_{2}L_{1}}}\ln\{\|\frac{2L_{2}\mu_{1}+L_{3}}{\sqrt{L_{3}^{2}-4L_{2}L_{1}}}\|
\|\frac{L_{3}+\sqrt{L_{3}^{2}-4L_{2}L_{1}}}{L_{3}-\sqrt{L_{3}^{2}+4L_{2}L_{1}}}\|\} 
\end{eqnarray}
 where $ L_{1}=\|(A(p_{0})+B\phi(p)+BK)\|$, $L_{2}=\|BK\|$ and $L_{3}=L_{1}+L_{2}.$
\end{thrm}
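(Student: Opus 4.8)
The plan is to track the evolution of the ratio $y(t):=\|e(t)\|/\|x(t)\|$ over a generic inter-event interval $[t_{k},t_{k+1})$. By the definition (\ref{err1}) of the measurement error we have $e(t_{k})=0$, hence $y(t_{k})=0$, whereas the triggering law (\ref{eq:120}) forces an update precisely when $y$ first reaches the threshold $\mu_{1}$. A strictly positive lower bound on $\tau$ therefore follows once I show that $y$ cannot climb from $0$ to $\mu_{1}$ in arbitrarily short time, i.e. that $y$ obeys a differential inequality whose right-hand side stays finite at $y=0$.

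First I would bound $\|\dot{x}\|$ along the closed loop (\ref{eq:eight}). Rewriting the dynamics as $\dot{x}=(A(p_{0})+B\phi(p)+BK_{1})x+BK_{1}e$ and taking norms gives $\|\dot{x}\|\le L_{1}\|x\|+L_{2}\|e\|$, with $L_{1}$ and $L_{2}$ as defined in the statement. Since $x(t_{k})$ is held constant over the interval, $\dot{e}=-\dot{x}$, so $\|\dot{e}\|\le\|\dot{x}\|$. I would then differentiate the ratio, controlling the non-smoothness of the Euclidean norm through $\tfrac{d}{dt}\|e\|\le\|\dot{e}\|$ and $\left|\tfrac{d}{dt}\|x\|\right|\le\|\dot{x}\|$, to obtain
$$\dot{y}\le(1+y)\frac{\|\dot{x}\|}{\|x\|}\le(1+y)(L_{1}+L_{2}y)=L_{2}y^{2}+L_{3}y+L_{1},$$
where $L_{3}=L_{1}+L_{2}$. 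This Riccati-type scalar differential inequality is the heart of the argument.

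Next I would invoke the comparison lemma: the solution of $\dot{\psi}=L_{2}\psi^{2}+L_{3}\psi+L_{1}$ with $\psi(t_{k})=0$ dominates $y$, so $\tau$ is bounded below by the time $\psi$ requires to rise from $0$ to $\mu_{1}$, namely
$$\tau\ge\int_{0}^{\mu_{1}}\frac{dy}{L_{2}y^{2}+L_{3}y+L_{1}}.$$
The integrand is continuous and strictly positive on $[0,\mu_{1}]$, so this integral is a strictly positive number and Zeno behaviour is excluded for every $\sigma\in(0,1)$. Evaluating the elementary integral with the standard antiderivative of $(ay^{2}+by+c)^{-1}$ yields the closed-form expression (\ref{tauer}); a convenient internal check is that $L_{3}=L_{1}+L_{2}$ collapses the discriminant to $L_{3}^{2}-4L_{1}L_{2}=(L_{1}-L_{2})^{2}$ and factors the quadratic as $(1+y)(L_{1}+L_{2}y)$, exactly the product appearing in the inequality above, which makes positivity of $\tau$ transparent.

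I expect the main obstacle to lie in the second paragraph, namely making the differentiation of $\|e\|/\|x\|$ rigorous. The Euclidean norm is not differentiable where its argument vanishes, so the estimate should be stated with upper Dini derivatives, or confined to the open set where $x\neq0$, which suffices because the comparison only needs to run until $y=\mu_{1}$. One must also verify that $L_{1}$ and $L_{2}$ furnish a \emph{uniform} bound despite the dependence of $\phi(p)$ on the uncertain parameter $p$; this is secured by the boundedness assumption (\ref{un190}). Once $\dot{y}\le L_{2}y^{2}+L_{3}y+L_{1}$ is established, the remaining integration is routine.
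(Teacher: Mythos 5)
Your proposal is correct and follows essentially the same path as the paper's proof: bounding $\|\dot{x}\|$ via the triangle inequality on (\ref{eq:eight}), deriving the Riccati-type inequality $\dot{y}\le L_{2}y^{2}+L_{3}y+L_{1}$ for $y=\|e\|/\|x\|$, invoking the comparison lemma, and measuring the time for the dominating scalar solution to climb from $0$ to $\mu_{1}$. If anything, your integral formulation $\tau\ge\int_{0}^{\mu_{1}}(L_{2}y^{2}+L_{3}y+L_{1})^{-1}dy$ together with the observation that $L_{3}=L_{1}+L_{2}$ gives discriminant $(L_{1}-L_{2})^{2}$ and factorization $(1+y)(L_{1}+L_{2}y)$ makes positivity of $\tau$ more transparent than the paper's final step, which asserts $\tau>0$ via constants $N_{1},N_{2},N_{3}$ that it never defines, and your treatment also covers the degenerate case $L_{1}=L_{2}$ where the paper's closed form breaks down.
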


\begin{proof}
From \cite{something6} the time derivative of $\|e\|/\|x\|$ can be written as 
\begin{eqnarray}\label{er1}
\frac{d}{dt}\frac{\|e\|}{\|x\|}&\leq & \frac{\|e\|\|\dot{x}\|}{\|e\|\|x\|}+\frac{\|x\|\|\dot{x}\|}{\|x\|\|x\|}\frac{\|e\|}{\|x\|}\nonumber
\\
&&\leq(1+\frac{\|e\|}{\|x\|})\frac{\|\dot{x}\|}{\|x\|}
\end{eqnarray}
Applying triangular inequality of vector norm on (\ref{eq:eight}) 
\begin{equation}\label{er2}
\|\dot{x}(t)\|\leq\|(A(p_{0})+B\phi(p)+BK_{1})\|\|x\|+\|BK_{1}\|\|e\|
\end{equation}
Denoting $ L_{1}=\|(A(p_{0})+B\phi(p)+BK_{1})\|$ , $L_{2}=\|BK_{1}\|$ and the ratio $\frac{\|e\|}{\|x\|}=y$  the  inequality (\ref{er2}) is simplified as
\begin{equation}\label{nil34}
\frac{dy}{dt}\leq{L_{1}+(L_{1}+L_{2})y+L_{2}y^{2}}
\end{equation}
 Applying comparison lemma \cite{nst669} on (\ref{nil34}), the differential inequality (\ref{nil34}) turns to the following differential equality 
\begin{equation}\label{time4}
\dot{\Omega}=L_{1}+(L_{1}+L_{2})\Omega+L_{2}\Omega^{2}
\end{equation}
With  a initial value $\Omega(0,\Omega_{0})=\Omega_{0}$, the solution $\Omega(t,\Omega_{0})$ of ( \ref{time4}) must satisfy the inequality $y(t)\leq\Omega(t,\Omega_{0})$. Thus the inter-event time, $\tau$ is bounded by $\mathbb{R}^{+}$ time to evolve  $\Omega$  from $0$ to $\mu_{1}$. The expression of $\tau$ can be derived by solving (\ref{time4}).
\begin{eqnarray}\label{ta12}
\tau &=&\frac{2}{\sqrt{L_{3}^{2}-4L_{2}L_{1}}}ln\norm{ \frac{(N_{1}+N_{2})N_{3}}{(N_{1}+N_{3})N_{2}}}\nonumber ,  \forall \ \sqrt{L_{3}^{2}-4L_{2}L_{1}}>0
\end{eqnarray}
 From (\ref{ta12}) it is obvious that  $\tau$ has positive value as $N_{3}>N_{2}$.  
\end{proof}
\begin{remark}
 For unmatched uncertain system expression of $\tau$ is similar to (\ref{tauer}) but the value of $L_{1}$, \ $L_{2}$ and $L_{3}$ are   $ L_{1}=\|A(p_{0})+BB^{+}(A(p)-A(p_{0}))+(I-BB^{+})(A(p)-A(p_{0}))+BK\|$, $L_{2}=\|BK\|$ and $L_{3}=L_{1}+L_{2}.$ In both cases the expressions of $L_{1}$ and $L_{3}$ depend on unknown system uncertainty. Therefore to compute the lower bound of inter-event time, the value of $L_{1}$ and $L_{3}$ are computed in entire uncertainty region such that $\tau$ is minimal. It is possible as the bound of parametric uncertainty is known for both matched and mismatched systems.
 \end{remark}
 \section{Dynamic event-triggered robust control}\label{se14}
A. Girad  proposed a dynamic event-triggering mechanism  where a dynamic variable $\eta(t)\geq 0$ is added to achieve larger inter-event time \cite{something}.  The time evolution of new variable $\eta(t)$ is expressed by the following differential equation.
 \begin{equation}\label{eq:13}
\dot{\eta}(t)=-\beta(\eta{(t)})+\sigma\alpha(\|x(t)\|)-\gamma(\|e(t)\|)
\end{equation}
Here $\beta, \alpha,\gamma$  are smooth class $K_{\infty}$ functions and $\sigma\in(0,1)$.
  The preliminaries and efficiency of dynamic event-triggering mechanism  over the static one \cite{something6} is reported in \cite{something}. In this section the dynamic event-triggering approach is adopted to solve the present robust control problem with limited state and input information.  The dynamic event triggering instant generated for uncertain system is discussed through the following theorem.
 \begin{thrm}\label{theo2}
Suppose the controller gain matrix $K_{1}$ is designed for the nominal system (\ref{mawe}) by minimizing the cost functional (\ref{eq:45}), then the augmented matched system (\ref{eq:eight}), (\ref{eq:13}) with event-trigger based controller (\ref{unc10}), is asymptotically stable if there exist a dynamic event occurring sequence $\{t_{k}\}_{k\in{\mathbb{I}}}$  given by
\begin{eqnarray}
t_{0}=0, \ t_{k+1}&=&inf\{t\in{\mathbb{R}}|t>t_{k}\wedge\eta(t)\nonumber\\&&+\theta(\mu_{1}\|x\|-\|e\|)\leq0\}\label{eq:2011}
\end{eqnarray}
Where $\mu_{1}$ is defined in (\ref{eq:1213}). 
\end{thrm}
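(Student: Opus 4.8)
The plan is to adapt Girard's dynamic triggering argument \cite{something} to the matched uncertain closed loop (\ref{eq:eight}), leaning on the ISS estimate already produced inside the proof of Theorem \ref{th1}. The central device is an \emph{augmented} Lyapunov function that couples the state Lyapunov function with the internal dynamic variable, namely $W(x,\eta)=V(x)+\eta$, where $V(x)=x^{T}Sx$ is exactly the function (\ref{eq:478}) used for the matched system and $\eta$ evolves according to (\ref{eq:13}). If I can show that $W$ is positive definite and strictly decreasing along the augmented trajectories of (\ref{eq:eight}), (\ref{eq:13}) under the trigger (\ref{eq:2011}), then asymptotic stability of the origin follows from the standard Lyapunov theorem.

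The first, and in my view decisive, step is to prove that $\eta(t)\ge 0$ for all $t\ge 0$, starting from $\eta(0)\ge 0$; without this, $W$ need not dominate $V$ and the whole scheme collapses. This is where the triggering rule enters. Between two consecutive events the infimum in (\ref{eq:2011}) has not yet been attained, so $\eta(t)+\theta(\mu_{1}\|x\|-\|e\|)>0$, i.e. $\mu_{1}\|x\|-\|e\|>-\eta(t)/\theta$ for the positive design constant $\theta$. I would translate this inequality into a lower bound on the forcing term $\sigma\alpha(\|x\|)-\gamma(\|e\|)$ appearing in (\ref{eq:13}), using the explicit quadratic forms (\ref{w23}), (\ref{w83}) together with the definition of $\mu_{1}$ in (\ref{eq:1213}); this should yield a scalar differential inequality of the type $\dot{\eta}\ge -\beta(\eta)-c\,\eta$ for a suitable constant $c\ge 0$. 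Since the right-hand side vanishes at $\eta=0$, the comparison lemma forces the solution to stay nonnegative, giving $\eta(t)\ge 0$ throughout, including across the trigger-free intervals and at the event times (where $\eta$ is continuous).

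With nonnegativity of $\eta$ secured, $W(x,\eta)=V(x)+\eta$ is positive definite and radially unbounded. The derivative computation is then routine: the ISS estimate derived in the proof of Theorem \ref{th1}, namely $\dot{V}\le -\alpha(\|x\|)+\gamma(\|e\|)$ in the sense of (\ref{deft2}), holds along (\ref{eq:eight}) irrespective of the trigger, while $\dot{\eta}$ is given verbatim by (\ref{eq:13}). Adding the two, the indefinite $\pm\gamma(\|e\|)$ terms cancel and I obtain $\dot{W}\le -(1-\sigma)\alpha(\|x\|)-\beta(\eta)$. Because $\sigma\in(0,1)$ and $\alpha,\beta$ are class $K_{\infty}$, the right-hand side is negative definite in $(x,\eta)$, so $W$ is a strict Lyapunov function for the augmented system and $(x,\eta)=(0,0)$ is asymptotically stable; in particular $x\to 0$.

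The main obstacle is precisely the nonnegativity argument for $\eta$: reconciling the \emph{linear-in-norm} triggering quantity $\mu_{1}\|x\|-\|e\|$ used in (\ref{eq:2011}) with the \emph{quadratic} gain functions $\alpha,\gamma$ that drive (\ref{eq:13}). The constant $c$ in the comparison ODE must be extracted carefully, possibly by bounding $\|x\|$ on the interval or by absorbing the cross term, so that the comparison system retains $\eta=0$ as a nonnegative-invariant equilibrium. Once that bookkeeping is settled, the cancellation producing $\dot{W}\le -(1-\sigma)\alpha(\|x\|)-\beta(\eta)$ and the resulting stability conclusion are immediate consequences of the static analysis already carried out in Theorem \ref{th1}.
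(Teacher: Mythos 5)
Your proposal is correct and rests on the same skeleton as the paper's proof --- the augmented Lyapunov function $W(x,\eta)=V_{m}(x)+\eta$ and a Girard-style decrease argument --- but the two executions distribute the work genuinely differently, and yours is the more complete one. The paper first specializes (\ref{eq:13}) to the linear form (\ref{eq:1214}), $\dot\eta=-\lambda\eta+(\mu_{1}\|x\|-\|e\|)$, matched to the trigger (\ref{eq:2011}); with that choice nonnegativity of $\eta$ would be immediate (between events $\mu_{1}\|x\|-\|e\|>-\eta/\theta$, hence $\dot\eta>-(\lambda+1/\theta)\eta$), and indeed the paper never argues it, simply conditioning its final sentence on $\eta(t)>0$. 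The price of the linear forcing is that nothing cancels in $\dot W$: the cross term $2x^{T}SBK_{1}e$ from (\ref{VdotEqn}) must be estimated through the trigger bound $\|e\|\leq\mu_{1}\|x\|+\eta/\theta$, which produces extra nonnegative terms (such as $2\|SBK_{1}\|\,\|x\|\,\eta/\theta$ and $\mu_{1}\|x\|-\|e\|$) that the asserted inequality (\ref{eq:1215}) silently discards --- exactly the linear-versus-quadratic mismatch you flag. Your route instead keeps the quadratic gains (\ref{w23})--(\ref{w83}) inside $\dot\eta$, so the $\pm\gamma(\|e\|)$ terms cancel exactly and $\dot W\leq-(1-\sigma)\alpha(\|x\|)-\beta(\eta)$ holds independently of the trigger; the mismatch then resurfaces only in your $\eta\geq0$ step, and there it can be closed rigorously: since $\gamma(\mu_{1}s)=\sigma^{2}\alpha(s)$ by (\ref{eq:1213}) and (\ref{w23})--(\ref{w83}), expanding $\gamma\bigl(\mu_{1}\|x\|+\eta/\theta\bigr)$ and absorbing its cross term by Young's inequality against $\sigma(1-\sigma)\alpha(\|x\|)$ gives $\gamma(\|e\|)\leq\sigma\alpha(\|x\|)+c\,\eta^{2}$ for a computable $c>0$, hence $\dot\eta\geq-\beta(\eta)-c\eta^{2}$ between events, and the comparison lemma (the right-hand side vanishes at $\eta=0$) yields $\eta(t)\geq0$ for every $\sigma\in(0,1)$. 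Note the comparison term comes out as $-c\eta^{2}$ rather than the $-c\eta$ you guessed, but this changes nothing in the invariance argument. In summary: the paper's specialization trivializes $\eta\geq0$ but leaves the decrease of $W$ essentially unproved, whereas your version trivializes the decrease of $W$ and concentrates all the difficulty in the $\eta\geq0$ step, where it can actually be finished; of the two, yours is the defensible proof.
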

\begin{proof}\label{x}
From (\ref{eq:13}) the evolution of $\eta(t)$ with respect to time can be defined as
\begin{equation}\label{eq:1214}
 \dot{\eta}(t)=-\lambda\eta(t)+(\mu_{1}\|x\|-\|e\|)
 \end{equation}
 Now select $W(x(t),\eta(t))=V_{m}(x)+\eta(t)$ as a Lyapunov  function for augmented systems (\ref{eq:eight}), (\ref{eq:1214}). Then using (\ref{VdotEqn}) and (\ref{eq:1214})  the time derivative of $W(x)$ can be written as
\begin{eqnarray}\label{eq:1215}
\dot{W}(x)\leq(\sigma -1)\lambda_{min}(Q_{1})\|x\|^{2}-\lambda\eta(t)
\end{eqnarray}
 Form (\ref{eq:1215}), for any value of $\sigma\in(0,1)$ and $\eta(t)>0$ the closed loop system (\ref{eq:eight}) is ISS by dynamic event-triggering rule (\ref{eq:2011}). 
\end{proof}
\begin{remark}
Dynamic event-triggering law for unmatched system is not discussed here. In that case triggering law  will  depend on parameter $\mu_{2}$ and $Q_{2}$, defined in (\ref{miu1}), (\ref{qq}). The stability proof will be similar to the proof of Theorem \ref{theo2}. The mathematical expression of $\tau$  for dynamic, robust event-triggered strategies is not included  in this paper. The existence of a strictly positive inter-event time $\tau$ for dynamic event-triggered case is shown through  numerical results which is discussed in subsequent section.
\end{remark}
To realize dynamic event-triggered control following algorithm is considered. 
\begin{algorithm}
    \caption{Dynamic event-triggered control for matched uncertain system}\label{alg:calcqij}
    \begin{algorithmic}[1]
      \STATE Initialization: $t\Leftarrow0$, $x\Leftarrow x_{0}$ , $x(t_{k})\Leftarrow x_{0}$.
      \STATE Given value: $A(p_{0})$, $B$, $F_{m}$, $\sigma$,  $\eta$, $\theta$
      \STATE Compute $\|x\|$, $\|e\|$ and $\mu_{1}$ using  (\ref{eq:1213})
      \IF{ $\eta(t)+\theta(\mu_{1}\|x\| - \|e\|)\leq0$}
      \STATE  Transmit $x(t_{k})$ from system end to  controller end.
      \STATE Solve algebraic Riccati equation (\ref{eq:1213}) and compute controller gain $K_{1}$
      \STATE Compute $u_{1}(t_{k})=K_{1}x(t_{k})$
      \STATE Update $u_{1}(t_{k})$ in (\ref{mis12})
      \ELSE 
      \STATE $u_{1}(t)=u_{1}(t_{k-1})$
      \ENDIF
      
      \STATE Return to line 3
    \end{algorithmic}
  \end{algorithm} 
   \subsection{Guideline for a possible selection of design parameters}
The parameters $\theta$, $\sigma$ and $\lambda$ are used in (\ref{eq:2011})-(\ref{eq:1215}). These parameters mainly affect the  lower bound of inter-event time and convergence rate of system state. This subsection introduces a possible selection guideline of such parameters.  The convergence of closed loop system  (\ref{eq:eight}) and (\ref{mis12}) are directly associated with $\sigma$ as seen in (\ref{eq:1215}). As $\sigma\rightarrow 0 $ the convergence rate of (\ref{eq:eight}) [or (\ref{mis12})] equivalent  to the ideal closed loop system (\ref{eq:five}). The generated event number can also be controlled by varying the value of $\sigma$.  similarly the parameter $\theta$ has  contribution in the inter-event time $\tau$. A possible selection procedure of parameter $\theta$ is carried out by deriving a lower bound on $\tau$. The results are stated in form of a theorem.  
\begin{thrm}\label{thy}
$\forall \ \sigma\in(0,1)$, $\eta>0$ and $\theta>0$ the system (\ref{eq:eight}), (\ref{eq:1214}) with triggering law (\ref{eq:2011}) has strictly positive lower bound of inter-event time $\tau>0$ and it is expressed as
\begin{eqnarray}\label{tay}
\tau=\int_{0}^{\mu_{1}}\frac{d\Gamma}{\frac{L_{1}}{\mu_{1}}
+(L_{2}+\lambda)\Gamma+(\frac{1}{\theta}+L_{2}\mu_{1})\Gamma^{2}}
\end{eqnarray}
 where $ L_{1}=\|(A(p_{0})+B\phi(p)+BK)\|$,  $L_{2}=\|BK\|$ and $0<\theta\leq\frac{1}{L_{1}-\lambda}$. 
\end{thrm}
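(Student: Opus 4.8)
The plan is to mirror the static inter-event analysis of Theorem \ref{th3}, but to track a single normalized scalar that folds in the auxiliary state $\eta$. On any inter-event interval $[t_{k},t_{k+1})$ the triggering function $\eta(t)+\theta(\mu_{1}\|x\|-\|e\|)$ is strictly positive, with equality only at the next event, and the sampling resets the error so that $e(t_{k})=0$ and $\|e\|/\|x\|=0$ at the left endpoint. I would introduce
$\Gamma(t):=\frac{\|e\|}{\|x\|}-\frac{\eta}{\theta\|x\|}$,
which is designed so that the event condition $\eta+\theta(\mu_{1}\|x\|-\|e\|)=0$ is exactly $\Gamma=\mu_{1}$, while $\Gamma(t_{k})=-\eta(t_{k})/(\theta\|x(t_{k})\|)$. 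Since any $\eta(t_{k})>0$ only pushes $\Gamma(t_{k})$ below $0$ and thus delays the event, the smallest inter-event time arises in the worst case $\eta(t_{k})=0$, where $\Gamma$ climbs from $0$ to $\mu_{1}$; this is precisely the range appearing in (\ref{tay}), and it is all a lower bound needs.

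Next I would differentiate $\Gamma$. The $\|e\|/\|x\|$ part is handled as in (\ref{er1})--(\ref{nil34}): differentiating the ratio and inserting the triangle-inequality bound (\ref{er2}), $\|\dot{x}\|\le L_{1}\|x\|+L_{2}\|e\|$, produces quadratic terms in $\Gamma$ with coefficients built from $L_{1}$ and $L_{2}$. The contribution of $\eta/(\theta\|x\|)$ comes from the auxiliary dynamics (\ref{eq:1214}), $\dot{\eta}=-\lambda\eta+(\mu_{1}\|x\|-\|e\|)$, which injects the $\lambda$ term into the linear coefficient and the $\tfrac1\theta$ term into the quadratic coefficient. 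Collecting all contributions and discarding the terms that carry a favorable sign yields a one-sided differential inequality $\dot{\Gamma}\le \frac{L_{1}}{\mu_{1}}+(L_{2}+\lambda)\Gamma+\bigl(\frac{1}{\theta}+L_{2}\mu_{1}\bigr)\Gamma^{2}$.

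I would then invoke the comparison lemma, exactly as in the passage from (\ref{nil34}) to (\ref{time4}), to replace this inequality by the scalar ODE $\dot{\Gamma}=\frac{L_{1}}{\mu_{1}}+(L_{2}+\lambda)\Gamma+\bigl(\frac{1}{\theta}+L_{2}\mu_{1}\bigr)\Gamma^{2}$ with $\Gamma(t_{k})=0$. The true quantity stays below the comparison solution, so the time for the latter to rise from $0$ to $\mu_{1}$ is a genuine lower bound on $\tau$; separating variables and integrating then gives precisely (\ref{tay}). Strict positivity of $\tau$ follows because, with $\lambda,\theta>0$ and $L_{1},L_{2}\ge 0$, every coefficient of the denominator is positive, so the integrand is continuous, positive and bounded on the compact interval $[0,\mu_{1}]$, forcing a strictly positive and finite integral. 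The hypothesis $0<\theta\le 1/(L_{1}-\lambda)$, equivalent to $\tfrac1\theta+\lambda\ge L_{1}$, is used to keep $\eta(t)$ non-negative and to ensure the comparison ODE truly dominates the real dynamics, which is what legitimizes the worst-case reduction $\eta(t_{k})=0$.

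The main obstacle I anticipate is the bookkeeping in the second step: the derivative of $\eta/(\theta\|x\|)$ couples $\eta$, $\|x\|$ and $\|e\|$ through the factor $\bigl(\tfrac{d}{dt}\|x\|\bigr)/\|x\|$, so closing the estimate as a clean autonomous inequality in the single variable $\Gamma$ requires carefully bounding these cross terms (using $\eta\ge 0$ and $\bigl|\tfrac{d}{dt}\|x\|\bigr|\le\|\dot{x}\|$) and checking that every discarded term really has the correct sign. Recovering the exact coefficients $\frac{L_{1}}{\mu_{1}}$, $L_{2}+\lambda$ and $\frac{1}{\theta}+L_{2}\mu_{1}$ — rather than merely some quadratic majorant — is the delicate part, and it is exactly here that the precise definition of $\Gamma$ and the constraint on $\theta$ must be exploited.
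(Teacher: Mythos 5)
Your overall skeleton (a normalized scalar variable, a quadratic differential inequality, the comparison lemma, positivity of the integral) is the same as the paper's, but the crucial choice of the scalar variable is different, and yours breaks exactly at the step you yourself flagged as delicate. Your variable $\Gamma=\frac{\|e\|}{\|x\|}-\frac{\eta}{\theta\|x\|}$ puts $\eta$ \emph{additively in the numerator}, and as a consequence the $\eta$-dependent terms in $\dot\Gamma$ enter with unfavourable signs. Concretely, write $y=\|e\|/\|x\|$ and $z=\eta/(\theta\|x\|)\ge 0$, so that $\Gamma=y-z$. Using $\|\dot x\|\le L_{1}\|x\|+L_{2}\|e\|$, $\dot\eta=-\lambda\eta+\mu_{1}\|x\|-\|e\|$ and $\bigl|\tfrac{d}{dt}\|x\|\bigr|\le\|\dot x\|$, the computation gives
\begin{equation*}
\dot\Gamma\;\le\; L_{1}+(L_{1}+L_{2})\Gamma+L_{2}\Gamma^{2}+\frac{\Gamma-\mu_{1}}{\theta}
\;+\;z\Bigl(2L_{1}+L_{2}+\lambda+\tfrac{1}{\theta}+3L_{2}\Gamma+2L_{2}z\Bigr),
\end{equation*}
and on the relevant range $\Gamma\ge 0$ the entire bracket multiplying $z$ is positive. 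The term $+\lambda z$ is not an artifact of sloppy estimation: it arises exactly from the $-\lambda\eta$ part of $\dot\eta$ after the sign flip caused by your minus sign. Since $z$ is not controlled by $\Gamma$ (it can be arbitrarily large when $\|x\|$ is small compared to $\eta$), these terms can be neither discarded nor absorbed, so no autonomous majorant $\dot\Gamma\le g(\Gamma)$ — let alone one with the specific coefficients of (\ref{tay}) — follows. For the same reason your claim that $\dot\eta$ ``injects the $\lambda$ term into the linear coefficient and $\tfrac1\theta$ into the quadratic coefficient'' is not right: what it injects is $+\lambda z$ and $+(\Gamma+z-\mu_{1})/\theta$, i.e.\ terms linear (not quadratic) in $\Gamma$ plus uncontrolled $z$ terms.

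The paper's proof avoids all of this by putting $\eta$ in the \emph{denominator}: $\Gamma=\theta\|e\|/(\eta+\theta\mu_{1}\|x\|)$. With that choice $\eta\ge0$ only helps: $\theta\|x\|/(\eta+\theta\mu_{1}\|x\|)\le 1/\mu_{1}$ produces the $L_{1}/\mu_{1}$ term, $\eta/(\eta+\theta\mu_{1}\|x\|)\le 1$ produces $\lambda\Gamma$, and $\theta\|e\|/(\eta+\theta\mu_{1}\|x\|)=\Gamma$ produces the $\Gamma^{2}/\theta$ term; the single leftover term is $\frac{\theta\mu_{1}\|x\|}{\eta+\theta\mu_{1}\|x\|}\bigl(L_{1}-\lambda-\tfrac1\theta\bigr)\Gamma$, which the hypothesis $0<\theta\le 1/(L_{1}-\lambda)$ makes non-positive. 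That — not ``keeping $\eta$ non-negative'' — is the actual role of the $\theta$ condition ($\eta\ge0$ follows from the triggering rule and the $\eta$-dynamics alone). A further benefit is that the paper's $\Gamma$ vanishes at every event instant regardless of $\eta(t_{k})$, so the worst-case reduction $\eta(t_{k})=0$ that your normalization forces on you is simply unnecessary. To repair your argument you essentially have to abandon your $\Gamma$ and adopt the paper's ratio.
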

\begin{proof}
The proof of this theorem is inspired by \cite{something} and included in Appendix A.
\end{proof}
\begin{remark}
 The existence of positive inter-event time is guaranteed in the range of $0<\theta\leq\frac{1}{L_{1}-\lambda}$ and it helps to select the other parameter $\lambda$.
The value of $\lambda$ must satisfy $\lambda\leq L_{1}$ to make $\theta$ positive.  
\end{remark}
\begin{remark}
The expression of $\tau$ in (\ref{tay}) is derived for   $0<\theta\leq\frac{1}{L_{1}-\lambda}$. Similarly, an analytical bound on $\tau$ can also be derived for $\theta>\frac{1}{L_{1}-\lambda}$. Note that the value of scalar $L_{1}$ depends on uncertainty $\phi(p)$. Hence, it is difficult to say  the exact value of $\theta$ for which  event-triggering law (\ref{eq:2011}) have larger lower-bound $\tau$. But it is possible to compute $\tau$ as the uncertain region is known apriori.
\end{remark}
\begin{remark}
The analytical expression of $\tau$ for mismatched system is not addressed here. But it can be derived using similar approach with different $L_{1}, L_{2}$ and $L_{3}$.  The existence of  larger average inter-event time of dynamic event-triggering rule over the static one is shown numerically in the next section. 
\end{remark}
\section{Simulation Results and comparisons}\label{se15} This section explains two separate numerical examples to validate the theoretical results for both matched and unmatched event-triggered systems.   
\subsection{Example 1} A second order linear system with matched uncertainty is shown below:
\begin{eqnarray*}
\begin{bmatrix}
\dot{x}_{1}\\
\dot{x}_{2}
\end{bmatrix}
=
\begin{bmatrix}
0 & 1\\
1+p & p
\end{bmatrix}
\begin{bmatrix}
x_{1} \\
x_{2}
\end{bmatrix}
+
\begin{bmatrix}
0 \\
1
\end{bmatrix}
u_{1}
\end{eqnarray*}
Here  $A(p)=\begin{bmatrix}
0 & 1\\
1+p & p
\end{bmatrix}$,  $B=\begin{bmatrix}
0 \\
1
\end{bmatrix}$ and the uncertain vector $p\in[-2, 2]$. Event-triggered based closed loop system is given by 
\begin{eqnarray*}
\begin{bmatrix}
\dot{x}_{1}\\
\dot{x}_{2}
\end{bmatrix}
=
A(p)
\begin{bmatrix}
x_{1} \\
x_{2}
\end{bmatrix}
+
B
(K_{1}x+K_{1}e)
\end{eqnarray*}
To solve (\ref{ri}), matrices $Q$, $R$, $F_{m}$  are selected as 
$Q=\begin{bmatrix}
10 & 0 \\0 & 10
\end{bmatrix}$, $R=2$  and $
\phi(p)^{T}R\phi(p)=
\begin{bmatrix}
p\\
p
\end{bmatrix}R
\begin{bmatrix}
p & p
\end{bmatrix}
\leq
\begin{bmatrix}
8 & 8\\
8 & 8
\end{bmatrix}
=F_{m}
$
Using the above matrices, the solution of (\ref{ri}) is obtained as $S= \begin{bmatrix}
16.89 & 6.89\\
6.89 & 6.89
\end{bmatrix}$. The optimal controller gain $K_{1}$ is calculated as  $K_{1}=-R^{-1}B^{T}S=\begin{bmatrix}
-4.1623 & -4.1623
\end{bmatrix}$. The scalar $ \lambda_{min}(Q_{1})=10$ is calculated from (\ref{eq1000}) using maximum upper bound of the uncertain parameter $p=2$.
To compute $t_{k}$, the design parameters are selected as  $\sigma=0.98, \ \theta=0.1$ and $ k=0.6$.   The simulation is executed for 4.5 second with initial condition $[0.2, -0.35]^{T}$ for static and $[0.2, -0.35, 0.01]^{T}$ for dynamic event-triggered control. Here the parameter $p$ varies sinusoidally according to the equation $p=2sin(t)$.  Figures \ref{fi:111}, \ref{fi:112} show  the time evolution of control input and its update time instant for a conventional system which does not use the event-triggering mechanism. It can be seen in Figures \ref{fi:115} and \ref{fi:118} that the error norm is bounded by a state dependent threshold. This signifies that the closed loop system holds the ISS property.  Figure \ref{fi:114} and \ref{fi:117} show the total number of events and their corresponding positive inter-execution time for a matched system. From Table \ref{tab:title}, it is seen that the average inter-input computation time of event-based control is 50 times larger than the conventional continuous one.

\subsection{Example 2}
We consider a second order unmatched uncertain system (\ref{mis12})
where $\ A(p)=\begin{bmatrix}
0 & 1+p\\
1 & 0
\end{bmatrix}$, \ $A(p_{0})=\begin{bmatrix}
0 & 1\\
1 & 0
\end{bmatrix}$, and $B=\begin{bmatrix}
0 \\
1 
\end{bmatrix}$. Here the uncertain parameter is $p\in[-2,2]$ and it varies sinusoidally. 
 Using (\ref{mis1001}), (\ref{mis10}) the matched and unmatched components of uncertainty are calculated as 
$BB^{+}(A(p)-A(p_{0})=\begin{bmatrix}
0 & 0\\
0 & 0
\end{bmatrix}
$, $(I-BB^{+})(A(p)-A(p_{0}))=\begin{bmatrix}
p & p\\
0 & 0
\end{bmatrix}$, $F_{u}=\begin{bmatrix}
0 & 0\\ 0 & 0
\end{bmatrix}$ and $H=\begin{bmatrix}
4 & 4\\4 & 4
\end{bmatrix}$.
 The parameters of (\ref{mismatched2}) are selected as $\alpha=1,\rho=0.05$ and $\beta=10$. To solve (\ref{rim}), the algebraic Riccati equation is rewritten as
\begin{equation*}
\hat{S}\tilde{A}+\tilde{A}^{T}\hat{S}+\tilde{Q}-\hat{S}\tilde{B}\tilde{R}^{-1}\tilde{B}^{T}\hat{S}=0
\end{equation*}
where $\tilde{A}=A(p_{0})=\begin{bmatrix}
0 &1\\1 & 0
\end{bmatrix}, \ \tilde{B}=\begin{bmatrix}
B & \alpha(I-BB^{+})
\end{bmatrix}=\begin{bmatrix}
0 & 1 & 0\\1 & 0 & 0 
\end{bmatrix}, \ \tilde{Q}=F_{u}+\rho^{2}H+\beta^{2}I=\begin{bmatrix}
104 & 4\\4 &104
\end{bmatrix}$ and $\tilde{R}=\begin{bmatrix}
I & 0\\ 0 & \rho^{2}I
\end{bmatrix}$.

Using the positive definite solution of the Riccati equation $\hat{S}$, the feedback control input $u_{2}$ and $v$ are computed as
\begin{eqnarray*}
\begin{bmatrix}
u_{2}\\v
\end{bmatrix}
&=&\begin{bmatrix}
-9.9877 & -11.1232\\ -4.9215  & -0.4994\\0 &0
\end{bmatrix}\begin{bmatrix}
x_{1}\\x_{2}
\end{bmatrix}
\end{eqnarray*}
To compute the event-triggering conditions (\ref{mis16}) and (\ref{eq:2011}), parameters are selected as $\sigma=0.98,  \ k=0.6, \ \lambda=(1-\sigma)k$ and $\theta=0.1$. Here $\lambda_{min}(Q_{2})$  is calculated based on (\ref{qq}). The simulation is executed for 3.5 seconds with initial condition $[0.2, \ -0.35]^{T}$ and $[0.2, \ -0.35, \ 0.01 ]^{T}$ for static and dynamic cases respectively.

\begin{figure}
        \centering
        \begin{subfigure}{0.4\textwidth}
                \includegraphics[width=7 cm,height=4 cm]{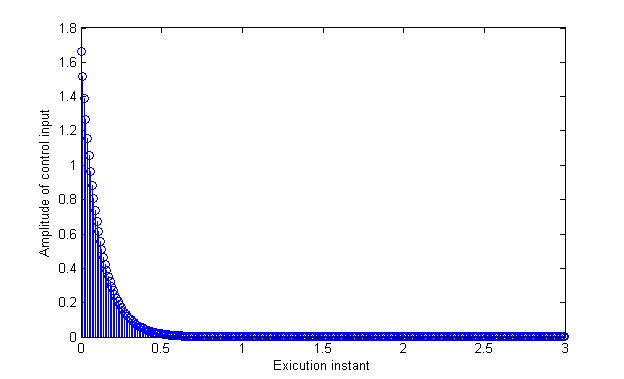}
\caption{Control input for without event-triggered control. }\label{fi:111}
        \end{subfigure}%
        ~ 
        \begin{subfigure}{0.4\textwidth}
                \includegraphics[width=7.5 cm,height=4 cm]{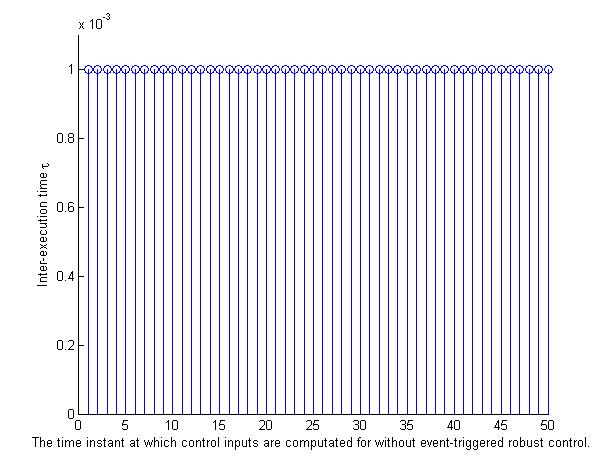}
\caption{ The time instant at which the control input are computated for without event-triggered robust control (for first few miliseconds). }\label{fi:112}
        \end{subfigure}
        ~ 
        \caption{Robust control of matched system with continious control law.}\label{fig:animals}
\end{figure}
\begin{figure}
        \centering
        ~ 
        \begin{subfigure}{0.4\textwidth}
                \includegraphics[width=7 cm,height=3.8 cm]{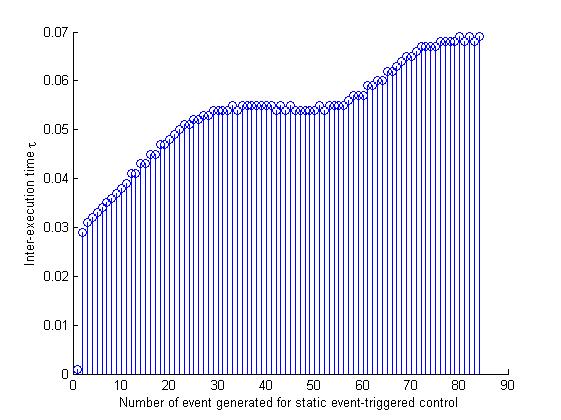}
\caption{Number of event occurrence and time interval between two consecutive events for static event-triggered control. }\label{fi:114}
        \end{subfigure}
        ~ 
          \begin{subfigure}{0.3\textwidth}
                \includegraphics[width=6 cm,height=3.8 cm]{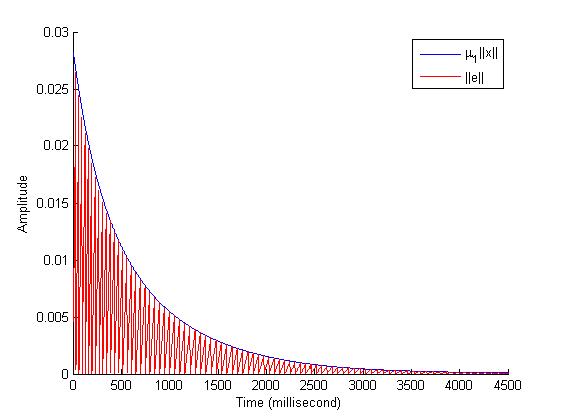}
\caption{Time evolution of $\|e\|$ (which is always within state state dependent threshold limit $\mu_{1}\|x\|$ ) of static event-triggered control.}\label{fi:115}
        \end{subfigure}
        
        \caption{Static event-triggered control with matched uncertainty.}\label{fig:animals}
\end{figure}
\begin{figure}
        \centering
        \begin{subfigure}{0.4\textwidth}
                \includegraphics[width=6 cm,height=3.8 cm]{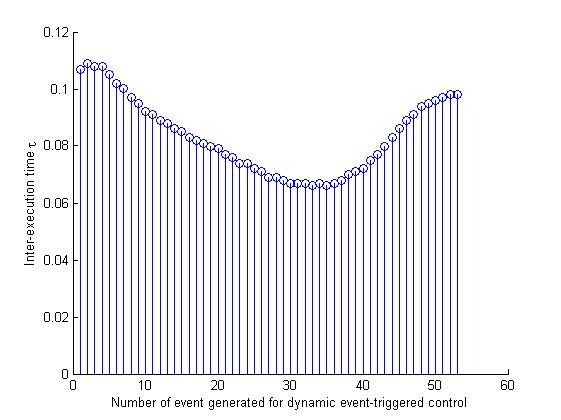}
\caption{Number of event occurrence and time interval between two consecutive events for dynamic event-triggered control. }\label{fi:117}
        \end{subfigure}
        ~ 
          \begin{subfigure}{0.3\textwidth}
                \includegraphics[width=6 cm,height=3.8 cm]{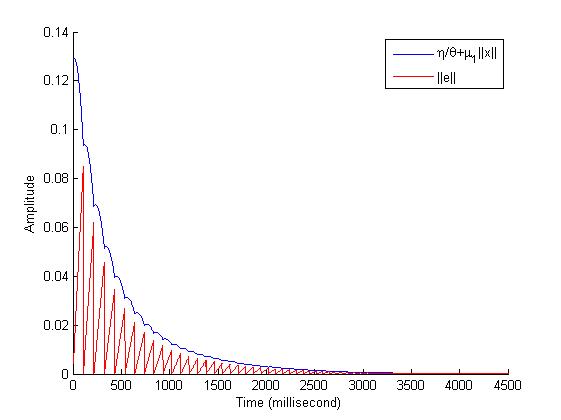}
\caption{Time evolution of $\|e\|$ (which is always within state state dependent threshold limit $\{\eta/\theta+\mu_{1}\|x\|\}$ ) of dynamic event-triggered control. }\label{fi:118}
        \end{subfigure}
\caption{Dynamic event-triggered control with matched uncertainty.}\label{fig:animals}
\end{figure}
\begin{figure}
        \centering
        \begin{subfigure}{0.4\textwidth}
                \includegraphics[width=7 cm,height=4 cm]{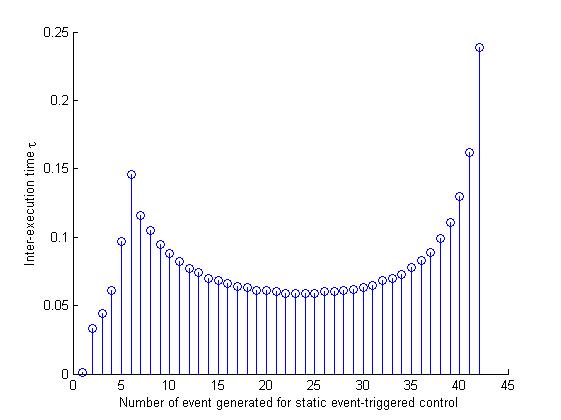}
\caption{Number of event occurrence and time interval between two consecutive events for static event-triggered control.}\label{fi:119}
        \end{subfigure}
        \begin{subfigure}{0.4\textwidth}
                \includegraphics[width=7 cm,height=4 cm]{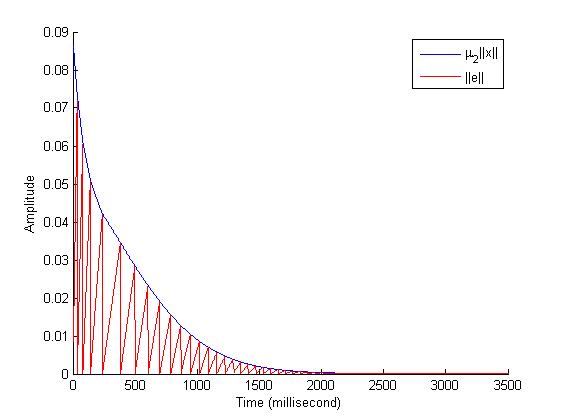}
\caption{Time evolution of $\|e\|$ (which is always within state state dependent threshold limit $\mu_{2}\|x\|$ ) for static event-triggered control. }\label{fi:1110}
        \end{subfigure}
         \caption{ Static event-triggered control with unmatched uncertainty.}\label{fig:animals}
\end{figure}
\begin{figure}
        \centering
        \begin{subfigure}{0.4\textwidth}
                \includegraphics[width=7 cm,height=4 cm]{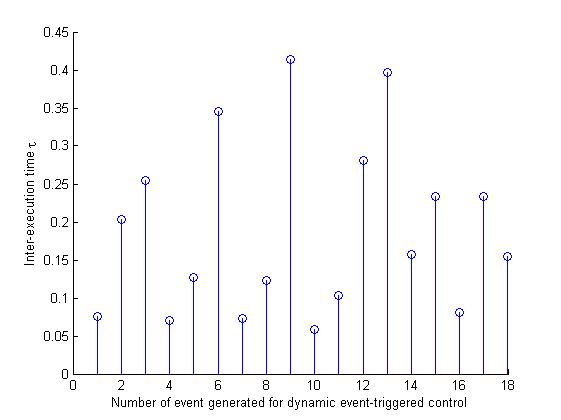}
\caption{Number of event occurrence and time interval between two consecutive events for dynamic event-triggered control. }\label{fi:1111}
        \end{subfigure}       
        \begin{subfigure}{0.4\textwidth}
                \includegraphics[width=7 cm,height=4 cm]{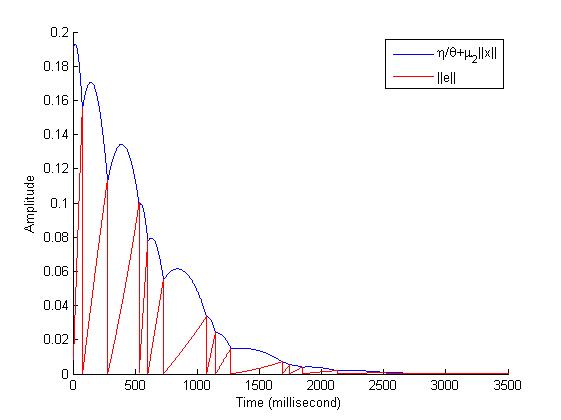}
\caption{Time evolution of $\|e\|$ (which is always within state state dependent threshold limit $\{\eta/\theta+\mu_{2}\|x\|\}$) for dynamic event-triggered control.}\label{fi:1112}
        \end{subfigure}   
        \caption{Dynamic event-triggered control with unmatched uncertainty.}\label{fig:animals}
\end{figure}

\begin{table}
\begin{center}
\caption {Comparative results of event-triggered and conventional continuous robust control approach} \label{tab:title} 
\begin{tabular}{ |l|l|l|l|l| }
\hline 
Control  mechanism & $\tau_{max}$ & $\tau_{min}$ &  $\tau_{avg}$ & $u_{total}$\\ \hline

Without event-triggered control & $0.001$ & $0.001$ &$0.001 $ & $3001$\\ \hline
 \multirow{2}{*}{Static  event-triggered control} & $ 0.06_{M}$ & $0.001_{M}$& $0.05_{M}$ &$86_{M}$ \\
 & $ 0.14_{U}$ & $0.01_{U}$ &$0.07_{U}$ &$43_{U}$ \\
  \hline
\multirow{2}{*}{Dynamic event-triggered control} & $0.10_{M}$ & $ 0.06_{M}$ & $0.08_{M}$ &$55_{M}$ \\
 & $0.44_{U}$& $0.05_{U}$ & $0.16_{U}$ &$19_{U}$\\
  \hline
\end{tabular}
\end{center}
\end{table}
Figure \ref{fi:1110} and  \ref{fi:1112} show that the error norm is bounded by a state dependent threshold for the unmatched system. The total number of events and their corresponding positive inter-execution time for unmatched system are shown in  Figure \ref{fi:119} and \ref{fi:1111}. For the purpose of comparison, Table \ref{tab:title} shows the inter-execution time for both event-triggered and without event-triggered control strategy. Here $\tau_{max}, \tau_{min},\tau_{avg}$ are the maximum, minimum and average inter-event time respectively. The scalar $u_{total}$ represents the total number of time instants at which control input is updated during the simulation period. According to Table \ref{tab:title}, the proposed event-triggered control strategy significantly reduces computation and transmission burden in the presence of parametric uncertainty. It also establishes the efficiency of dynamic event-triggering approach, as the average inter-event time is comparatively larger than the static one. The proposed event-based robust control law also ensures that the existence of positive inter-event time. 
\section{Conclusion}\label{se16}
This paper proposes a framework of event-triggered based robust control strategy for both matched unmatched uncertain system. The proposed control law is valid for a wider class of linear systems  in which event-triggering law is applicable. To design the event-triggering law, both static and dynamic event-triggering mechanisms  are adopted. The stability of closed loop event-triggering system is proved to be ISS for bounded variation of parameters. An analytical expression of static event-triggering mechanism  ensure that the proposed method is free from the Zeno behaviour. The lower bound of inter-event time for static event-triggered control is also derived. It is observed that the total number of control input computation and information transmission are very less in an event-based mechanism over  conventional system which do not use the event-triggered approach.\\
The detailed analysis of design parameters, like $\theta$, $\sigma$ and $k$ are not addressed in this paper. But theoretical analysis of these parameters are necessary to find out the optimal values and their effects on system performance. The numerical results show that average inter-execution time of dynamic event-triggered system is larger than the static one. However there need an exact mathematical expression of  inter-event time for dynamic event-triggering law. Self-triggered approach \cite{new1012}-\cite{new1111} may be considered to solve the robust control problem as a future work. 
\appendices
\section{}\label{ap121}
\begin{proof}[Proof of Lemma \ref{lemm1}]
The present approach translates the robust control problem stated in Section \ref{se12} to an equivalent optimal control problem \cite{so:one,so:1}. The proof consists of two parts. 
\subsection{Stability proof for matched uncertain system} 
If $V(x)$ is the Lyapunov function for (\ref{mawe}), then the time derivative of $V(x)$ along the state trajectory of (\ref{eq:five}) is
\begin{eqnarray*}
\dot{V}(x)&=&V_{x}\dot{x}
= V_{x}^{T}(A(p_0)x+BK_{1}x)+V_{x}^{T}B\phi(p)x
\end{eqnarray*}
Using (\ref{eq:11}), (\ref{eq:12}) and by adding and subtracting $x^{T}\phi(p)^{T}R\phi(p)x$, the $\dot{V}(x)$ reduces to
\begin{eqnarray}
\dot{V}(x)& = &-x^{T}((F_{m}-\phi(p)^{T}R\phi(p))+Q+(K_{1}+\phi(p))^{T}R(K_{1}+\phi(p))x\\
&&\leq -x^{T}x
\end{eqnarray}
Now $\dot{V}(x) \leq 0 $ for $x\neq 0$, which ensure asymptotic stability of the original uncertain matched system (\ref{eq:five}), (\ref{mis66}).
\subsection{Stability proof for unmatched uncertain system}
Similarly for a Lyapunov function $V(x)$, the $\frac{dV(x)}{dt}$ along the state trajectory of (\ref{eq:five}) is simplified using
 (\ref{mis10})-(\ref{miswe}) and  (\ref{mis3})-(\ref{miss35}) as
\begin{eqnarray}
\dot{V}(x)&\leqslant& -x^{T}(F_{u}+\rho^{2}H+\beta^{2}I)x - \rho^{2}x^{T}L^{T}Lx+x^{T}F_{u}x+\rho^{2}x^{T}L^{T}Lx+\rho^{2}x^{T}Hx+2\rho^{2}x^{T}L^{T}Lx\nonumber\\&&=-x^{T}(\beta^{2}I-2\rho^{2}L^{T}L)x
\end{eqnarray}

Now $\dot{V}(x)\leq 0$ for $x\neq0$ if the inequality $\beta^{2}I-2\rho^2L^{T}L>0$ holds. Therefore the closed loop system (\ref{eq:five}), (\ref{mis1001}) is asymptotically stable for all $p\in P$.

From the above two proofs, the optimal control input  $u_{1}$ of (\ref{mawe}) or $u_{2}$ of (\ref{mismatched1}) are the robust control input for (\ref{eq:five}) which proves
 the  Lemma\ref{lemm1}. 
 \end{proof}
\begin{proof}[Proof of Theorem \ref{thy}]
In dynamic event-triggered control the inter-event time depends on the  evolution of $\Gamma$, where  
\begin{eqnarray}
\Gamma=\frac{\theta\norm{e}}{\eta+\theta\mu_{1}\norm{x}}
\end{eqnarray}
The time derivative of $\Gamma$ along the direction of (\ref{eq:eight}), (\ref{eq:1214}) is simplified as 
\begin{eqnarray}\label{din}
\dot{\Gamma}&\leq & \frac{\theta (L_{1}\norm{x}+L_{2}\norm{e})}{(\eta+ \theta \mu_{1}\norm{x})}+\frac{\theta\norm{e}}{(\eta+\theta+\mu_{1}\norm{x})^{2}}\bigg\{\lambda \eta-\mu_{1}\norm{x}+\norm{e}+\theta\mu_{1}L_{1}\norm{x}+\theta\mu_{1}L_{2}\norm{e}\bigg\}\nonumber\\ &&
\leq\frac{ L_{1}}{\mu_{1}}
+(L_{2}+\lambda)\Gamma+(\frac{1}{\theta}+L_{2}\mu_{1})\Gamma^{2}+\frac{\theta\mu_{1}\norm x}{(\eta+\theta\mu_{1}\norm x)}\bigg(-\lambda-\frac{1}{\theta}+ L_{1}\bigg)\Gamma
\end{eqnarray}
Selecting $\theta=\frac{1}{L_{1}-\lambda}$, the final term in (\ref{din}) reduces to zero. 
Adopting  the similar steps as described in Section \ref{se123}, the lower bound of inter-event time for dynamic event-triggering is   
 \begin{eqnarray}\label{dse}
\tau=\int_{0}^{\mu_{1}}\frac{d\Gamma}{\frac{ L_{1}}{ \mu_{1}}
+(L_{2}+\lambda)\Gamma+(\frac{1}{\theta}+L_{2}\mu_{1})\Gamma^{2}}
\end{eqnarray}
To prove the positiveness of $\tau$ consider the function
\begin{eqnarray}\label{pfun}
g(\Gamma)= \frac{L_{1}}{ \mu_{1}}
+(L_{2}+\lambda)\Gamma+(\frac{1}{\theta}+L_{2}\mu_{1})\Gamma^{2}
\end{eqnarray} 
which has all positive coefficients. The function (\ref{pfun}) is a positive function  as $\dfrac{dg}{d\Gamma}>0, \ \ \forall \ \Gamma>0$. The  $\Gamma$ is a positive variable as $\eta, \ \theta,$ and $\mu_{1}$ all are positive. Therefore integration of (\ref{pfun}) over any positive interval is always positive. The expression (\ref{dse}) is also valid for $0<\theta<\frac{1}{L_{1}-\lambda}$. This ends the proof. 
\end{proof}
\section*{Acknowledgment}
The research work reported in this paper was partially supported by BRNS under the grant RP02348.

\end{document}